\newcommand\Z{{\mathbb Z}}
\newcommand\N{{\mathbb N}}
\newcommand\C{{\mathbb C}}
\newcommand\be{{\bf 1}}
\newcommand\Cc{{\mathcal C}}
\newcommand\Dd{{\mathcal D}}
\newcommand\ssl{\mathfrak{sl}}
\newcommand\sso{\mathfrak{so}}
\newcommand\ssp{\mathfrak{sp}}
\newcommand\asl{\widehat{\mathfrak{sl}}}
\newcommand\aLam{\widehat{\Lambda}}
\newcommand\alam{\widehat{\lambda}}
\newcommand\flam{\underline{\lambda}}
\newcommand\fLam{\underline{\Lambda}}
\newcommand\alamt{\widehat{\lambda^t}}
\newtheorem{thm}{Theorem}[section]
\newtheorem*{theorem}{Theorem}
\newtheorem{prop}[thm]{Proposition}
\newtheorem{lem}[thm]{Lemma}
\newtheorem{cor}[thm]{Corollary}
\theoremstyle{remark}
\newtheorem{rem}[thm]{Remark}
\theoremstyle{definition}
\newtheorem{eg}[thm]{Example}
\author{Victor Ostrik}
\address{Department of Mathematics, University of Oregon, Eugene, OR 97403-1222, USA}
\address{Laboratory of Algebraic Geometry,
National Research University Higher School of Economics, Moscow, Russia}
\author{Eric C. Rowell}
\address{ Department of Mathematics, Texas A\&M University, College Station, TX 77843-3368}
\author{Michael Sun}
\begin{document}
\email{vostrik@uoregon.edu, rowell@math.tamu.edu, michaelysun@outlook.com}
\title{Symplectic level-rank duality via tensor categories}

\begin{abstract} We give two proofs of a level-rank duality for braided fusion categories obtained from quantum groups of type $C$ at roots of unity.  The first proof uses conformal embeddings, while the second uses a classification of braided fusion categories associated with quantum groups of type $C$ at roots of unity. In addition we give a similar result for non-unitary braided fusion categories quantum groups of types $B$ and $C$ at odd roots of unity.
\end{abstract}
\thanks{VO partially supported by the NSF grant DMS-1702251 and by the Russian Academic Excellence Project '5-100'. ECR is partially supported by US NSF grant DMS-1664359, a Presidential Impact Fellowship of Texas A\&M, and a Simons Fellowship. This paper is based upon work supported by the National Science Foundation under the Grant No. DMS-1440140 while the first and  and second authors were in residence at the Mathematical Sciences Research Institute during the Spring 2020 semester.
MS thanks the Research Center for Operator Algebras at East China Normal University for supporting me, as well as the support of Shane Bastin, Kathy Kong and NEIS.
}

\maketitle

\section{Introduction}
There are a number of results which connect classical affine Lie algebras with interchanged level and rank; such phenomena are generally called {\em level-rank} duality. One manifestation of this is that the braided fusion categories associated with such Lie algebras are closely related.  
In type $A$ a result of this type was proved in \cite{OS}. It states that there is a braid-reversing tensor equivalence between $\Cc(\ssl_n)_k^0$ and $\Cc(\ssl_k)_n^0$ where $\Cc({\ssl}_n)_k^0$
is the adjoint subcategory of the modular tensor category $\Cc(\ssl_n)_k$ obtained from the affine Lie algebra $\widehat\ssl_n$ at level $k$. Recall that an alternative construction of $\Cc(\ssl_n)_k$ is via the semisimplification of the category of tilting modules of the quantum group $U_q\ssl_n$ specialized at $q=e^{\pi i/(n+k)}$, typically denoted $\Cc(\ssl_n,n+k)$.

In this paper we examine the analogous situation for the type $C$ case associated with symplectic Lie algebras $\ssp_{2n}$.  Just as in the type $A$ case, the unitary modular category $\Cc(\ssp_{2n})_k$  may be constructed in (at least) two ways: 1) as the semisimplification $\Cc(\ssp_{2n},2k+2n+2)$ of the subcategory of tilting modules in $Rep(U_q \ssp_{2n})$ for the choice $q=e^{\pi i/(2k+2n+2)}$ and 2) as the category $\Cc(\ssp_{2n})_k$ of level $k$ representations of the affine Lie algebra $\widehat{\ssp}_{2n}$ under the level preserving tensor product (see \cite{BK}). The braided monoidal equivalence of the categories constructed from these two approaches can be found in \cite{finkelberg,Huang}.  Despite this equivalence we will use both notations to distinguish the approaches.

From any braided fusion category $\Cc$ we may obtain a new braided fusion category $\Cc^{rev}$ with the same underlying fusion category by replacing the braiding isomorphisms $c_{X,Y}$ with $c_{Y,X}^{-1}$.
For any $\Z/2$-graded braided fusion category $\Cc=\Cc_0\oplus\Cc_1$ we may also obtain a new braided structure by replacing the braiding $c_{X,Y}$ for $X,Y\in\Cc_1$  by $-c_{X,Y}$, leaving the braiding unchanged if either of $X$ or $Y$ are in $\Cc_0$.  One way to construct this new braided category directly is to take the diagonal of the $\Z/2\times\Z/2$-graded category $\Cc\boxtimes sVec$.   We will denote by $\Cc^{-}$ the braided fusion category obtained in this way.
We prove the following:
\begin{theorem} There is a braid-reversing equivalence between
$\Cc(\ssp_{2n})_k$ and $\Cc(\ssp_{2k})_n^{-}$.
\end{theorem}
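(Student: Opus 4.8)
The plan is to realize the duality through the conformal embedding of affine Lie algebras
$$(\widehat\ssp_{2n})_k \oplus (\widehat\ssp_{2k})_n \hookrightarrow (\widehat\sso_{4nk})_1,$$
which arises because the tensor product $\C^{2n}\otimes\C^{2k}$ of the two defining symplectic spaces carries the symmetric form obtained by multiplying the two skew forms, making it an orthogonal space of dimension $4nk$. First I would confirm that this embedding is genuinely conformal by checking that the central charges match: a direct computation using $\dim\ssp_{2m}=m(2m+1)$ and dual Coxeter number $m+1$ gives
$$c(\ssp_{2n})_k + c(\ssp_{2k})_n = \frac{nk(2n+2k+2)}{n+k+1} = 2nk = \tfrac12\cdot 4nk = c(\sso_{4nk})_1.$$
This identity is the representation-theoretic heart of the duality and fixes the pairing of levels and ranks.

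Next I would pass to the categorical formulation. The conformal embedding endows the Deligne product $\Cc(\ssp_{2n})_k\boxtimes\Cc(\ssp_{2k})_n$ with a connected étale algebra $A$ whose category of local modules is braided equivalent to $\Cc(\sso_{4nk})_1$. The algebra $A$ is the image of the vacuum sector, and its simple summands record the branching rules of the embedding; the main input I would extract is that these summands are exactly the pairs $X_\lambda\boxtimes Y_{\lambda^t}$, where $\lambda$ ranges over Young diagrams fitting in the $n\times k$ rectangle and $\lambda^t$ denotes the transpose diagram fitting in the $k\times n$ rectangle. Because $\sso_{4nk}$ at level $1$ is a pointed modular category of rank $4$ (its simple objects being the vacuum, the vector, and two spinors), the structure of $A$ is rigidly constrained, and I would use this together with the dimension count $\FP(A)^2\cdot\FP(\Cc(\sso_{4nk})_1)=\FP(\Cc(\ssp_{2n})_k)\cdot\FP(\Cc(\ssp_{2k})_n)$ to pin down the branching pairing and to define the candidate functor $X_\lambda\mapsto Y_{\lambda^t}$.

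It remains to check that this bijection upgrades to a braid-reversing tensor equivalence and to locate the sign twist recorded by the superscript $-$. That the functor is monoidal on the level of fusion rules follows from the symmetry of the branching algebra $A$ under interchanging the two factors. The braiding comparison reduces to the twists: writing $h^{(n,k)}_\lambda = \frac{\langle\lambda,\lambda+2\rho\rangle}{2(n+k+1)}$ for the conformal weight, the conformal embedding forces $h^{(n,k)}_\lambda + h^{(k,n)}_{\lambda^t}$ to be congruent modulo $\Z$ to the conformal weight of the corresponding simple object of $(\sso_{4nk})_1$. Since the latter weights lie in $\tfrac12\Z$, this yields $\theta^{(n,k)}_\lambda\cdot\theta^{(k,n)}_{\lambda^t} = \pm 1$, with sign $+1$ on the trivially graded part and $-1$ on the nontrivially graded part of the $\Z/2$-grading. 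This is precisely the relation $\theta^{-}_{Y_{\lambda^t}} = (\theta^{(n,k)}_\lambda)^{-1}$ needed for a braid-reversing equivalence into $\Cc(\ssp_{2k})_n^{-}$, confirming that the spinorial sign of the free-fermion category $\Cc(\sso_{4nk})_1$ is exactly what produces the correction by $sVec$.

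The step I expect to be the main obstacle is making the branching pairing $\lambda\leftrightarrow\lambda^t$ completely precise and verifying that the induced functor is fully faithful and monoidal, rather than merely a bijection on isomorphism classes with matching twists. Controlling the étale algebra $A$ — showing it is connected, computing its simple constituents exactly, and ruling out extra multiplicities — is where the real work lies; the sign bookkeeping for the $-$ operation, while delicate, is essentially a finite computation once the branching is known. As an alternative that sidesteps the explicit analysis of $A$, one could instead invoke a classification of braided fusion categories coming from type $C$ quantum groups at roots of unity: both sides have identical fusion rules via transposition and, by the twist computation above, matching inverse ribbon structures, so a rigidity theorem for such categories would yield the equivalence directly.
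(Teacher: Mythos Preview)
Your proposal correctly identifies both routes the paper takes: the conformal-embedding approach and the Tuba--Wenzl classification. However, in the first route there are two genuine gaps.

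First, the branching. You assert that the vacuum algebra $A$ decomposes as $\bigoplus_\lambda X_\lambda\boxtimes Y_{\lambda^t}$ over \emph{all} $\lambda$ in the $n\times k$ rectangle. In fact only $\lambda$ with $|\lambda|$ even occur in $A=\aLam_0$; the odd ones sit in $\aLam_1$. More importantly, the standard input (the Kac--Peterson formula) does \emph{not} give you the transpose pairing: it decomposes the spinor $\aLam_+\oplus\aLam_-$ as $\bigoplus_\lambda \alam\boxtimes\widehat{\lambda^c}$, with the \emph{complement} $\lambda^c$, not $\lambda^t$. Passing from complement to transpose, and from the spinor to the vacuum, requires an additional categorical step --- tensoring by the invertible object $\mathbf{1}^c$ inside $\Cc(\ssp_{2k})_n$ and using that $(\mathbf{1}\boxtimes\mathbf{1}^c)\otimes A$ is a simple dyslectic $A$-module. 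This is not hard, but it is not automatic from the conformal embedding alone.

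Second, and more seriously, you correctly flag that upgrading the bijection $\lambda\leftrightarrow\lambda^t$ to a braided tensor functor is ``the main obstacle,'' but you do not supply the tool that does it. Your twist comparison $\theta_\lambda\theta_{\lambda^t}=\pm 1$ is a consistency check, not a construction: matching twists on simples does not produce a monoidal functor, let alone a braided one. The paper fills this gap with a structural theorem about Lagrangian algebras in products of non-degenerate categories (due to Davydov--Nikshych--Ostrik): from $A$ one builds a Lagrangian algebra $\tilde A$ in $\Cc(\ssp_{2n})_k\boxtimes\Cc(\ssp_{2k})_n\boxtimes\Cc(\sso_{4nk})_1^{rev}$, and then the general theorem manufactures the braid-reversing equivalence $\Cc(\ssp_{2n})_k\simeq\Dd_B^{dys}$ for a suitable $B$, after which one identifies $\Dd_B^{dys}$ with $\Cc(\ssp_{2k})_n^{-}$ via the free-module functor. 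Without this (or something equivalent), the argument stops at a ring isomorphism plus numerics.

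Your alternative via a rigidity/classification theorem for type $C$ braided fusion categories is exactly the paper's second proof, using Tuba--Wenzl: one checks that transpose gives a fusion-ring isomorphism and then matches the three braiding eigenvalues on the generator after braid-reversal and the global sign change, which is a finite computation.
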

We provide two proofs of this theorem.  The first, direct, proof follows the strategy of \cite{OS} using the conformal embedding $(\widehat{\mathfrak{sp}}_{2n})_k\oplus (\widehat{\mathfrak{sp}}_{2k})_n\subset (\widehat{\mathfrak{so}}_{4nk})_1.$  The second employs reconstruction techniques for braided fusion categories with fusion rules of type $C$, found in \cite{TW}.  While the latter proof is shorter, it invokes some heavy categorical machinery.  On the other hand, this categorical approach allows us to prove a related result for $\Cc(\ssp_{2n},\ell)$ for \emph{odd} $\ell$ where now the category $\Cc(\sso_{2k},\ell)$ with $\ell=2k+2n+1$ plays the role of the dual.  These are typically non-unitarizable categories, and hence cannot be constructed from affine Lie algebras in the standard way.  

It would be interesting to extend our results to the orthogonal case where the level-rank duality
connects affine Lie algebras $(\widehat{\mathfrak{so}}_{n})_k$ and $(\widehat{\mathfrak{so}}_{k})_n$. However this case seems to be technically more involved than the symplectic case 
and it is not considered in this paper. We refer the reader to \cite{Has,JL,Muk} for some interesting results in this direction.

This article is organized as follows.  In Section \ref{prelim} we lay the basic Lie theoretic and combinatorial groundwork and in Section \ref{branching} we describe the key conformal embedding.  Section \ref{main results} contains two proofs of our level-rank duality theorem.  The appendix contains a detailed proof of the Kac-Peterson formula in the symplectic case.

\section{Preliminaries}\label{prelim}
\subsection{Combinatorics}\label{combinatorics}
Let $\lambda = (\lambda_1\geq\dots\geq\lambda_n\ge 0)$ be a \emph{partition} of $|\lambda | := \lambda_1 + \dots + \lambda_n$ with $n$ parts (see e.g. \cite{Mac}). We will identify $\lambda$ with its corresponding \emph{Young diagram}. Let $I_{n,k}$ be the set of all partitions $\lambda$ whose Young diagram fits into an $n\times k$ rectangle; in other words $I_{n,k}$ consists of partitions with $n$ parts and $\lambda_1\le k$. We will denote by $I_{n,k}^0$ (respectively 
$I_{n,k}^1$) the subset of $I_{n,k}$ consisting of partitions $\lambda$ such that $|\lambda|$ is
even (respectively odd).

Denote by $\lambda^t$ the transposed partition of $\lambda$. Clearly, $\lambda\in I_{n,k}$ implies $\lambda^t\in I_{k,n}$. Denote by $\lambda^c$ the transpose of the complement of $\lambda$ in an $n\times k$ rectangle. Again,  $\lambda\in I_{n,k}$ implies $\lambda^c\in I_{k,n}$. We will also
consider a composition $(\lambda^t)^c=(\lambda^c)^t=:\lambda^{tc}$ which preserves the set
$I_{n,k}$.

\begin{eg} 
Let $\lambda =(2,1,1)\in I_{3,2}$. Then $\lambda^t=(3,1)\in I_{2,3}$, $\lambda^c=(2,0)\in I_{2,3}$,
and $\lambda^{tc}=(1,1,0)\in I_{3,2}$.
\end{eg}

Let $C_{n,k} :=\{(k_0,k_1,..., k_n) \in \N^{n+1} | \sum_i k_i = k\}$ be the set of  dominant weights for $\widehat{\mathfrak{sp}}_{2n}$
of level $k$. We identify the sets $C_{n,k}$ and $I_{n,k}$ via the mutually inverse bijections
$c_{n,k}: I_{n,k}\to C_{n,k}$, 
$c_{n,k}(\lambda) := (k-\lambda_1,\lambda_1- \lambda_2,\lambda_2-\lambda_3,\dots,\lambda_{n-1}-\lambda_n, \lambda_n)$ and $d_{n,k}: C_{n,k}\to I_{n,k}$, $d_{n,k}(k_0, k_1, \dots, k_{n})=(k_1+\dots+k_{n},k_2+\cdots+k_{n},\dots, k_{n})$. With these identifications the bijection $C_{n,k}\to C_{k,n}$
corresponding to $\lambda \mapsto \lambda^c$ can be described as follows (see \cite{KP}):
if we consider $n+k$ dots, first represent $(k_0,\dots, k_n)\in C_{n,k}$ by picking $n$ dots to be black (leaving the other $k$ dots white) so that there are $k_0$ white dots before the first black dot, $k_1$ white dots between the first and second black dot, and continue this until the last $k_n$ dots are after the last black dot. In this way the black dots partition the white dots according to an element of $C_{n,k}$. Its corresponding element in $C_{k,n}$ is how the white dots partition the black dots (see example below). 
\begin{eg}\label{bwdots} Let $n=7$, $k=6$ consider $n+k=13$ dots
$$\bullet\bullet\circ\bullet\bullet\bullet\bullet\circ\circ\circ\bullet\circ \circ$$
$$C_{7,6}\ni (0,0,1,0,0,0,3,2) \mapsto(2,4,0,0,1,0,0)\in C_{6,7}.$$
The corresponding partitions are $(6,6,5,5,5,5,2)\in I_{7,6}$ and $(5,1,1,1)\in I_{6,7}$.
\end{eg}

The bijection $\lambda \mapsto \lambda^{tc}$ also has a simple interpretation in this language:
one has to read the diagram of black and white dots representing $\lambda$ backwards.
\subsection{Symplectic Lie algebra} \label{symplectic}

Let $\mathfrak{sp}_{2N}$ be the Lie algebra of $2N\times 2N$ symplectic matrices over $\C$,
see e.g. \cite{Hum}.
\subsubsection{The root system of $\mathfrak{sp}_{2N}$}
Following \cite{Hum}, let $e_1,e_2,\dots,e_N$ be an orthonormal basis for $\mathfrak{h}^*$ such that the simple roots are given by
$$e_1-e_2, e_2-e_3,\dots, e_{N-2}-e_{N-1}, e_{N-1}-e_N,2e_N.$$
The positive long roots are therefore given by
$$2e_1,2e_2,\dots, 2e_N$$
Let $W$ be the Weyl group of $\mathfrak{sp}_{2N}$. We recall that the group $W$ identifies
with the group of signed permutations of the basis $e_1,e_2,\dots,e_N$, see \cite{Hum}.

\subsubsection{Affinization} Let $\widehat{\mathfrak{sp}}_{2N}$ be the affinization of $\mathfrak{sp}_{2N}$ (see e.g. \cite[Chapter 7]{Kac}). 
Thus $\widehat{\mathfrak{sp}}_{2N} = (\mathfrak{sp}_{2N}\otimes\C[t,t^{-1}])\oplus\C K$ where the element $K$ is central and (for $\kappa$ the Killing form)
$$[x\otimes t^a,y\otimes t^b]=[x,y]\otimes t^{a+b}+a\kappa(x,y)\delta_{a,-b}K.$$
\subsubsection{The root system of $\widehat{\mathfrak{sp}}_{2N}$}\label{affroots}
Let $\hat{W}$ be the affine Weyl group. Let $T\subset \hat{W}$ be the subgroup of translations, so $\hat{W}=W\rtimes T$ (\cite[Proposition 6.5]{Kac}). Write $\hat{\Delta}$ and $\hat{\Delta}^+$ for the affine root system and its positive roots respectively. Let $\delta$ be the indivisible positive imaginary root. Then $w(\delta)=\delta$ for all $w\in\hat{W}$ and the imaginary roots are nonzero multiples of $\delta$. The real roots of $\hat{\Delta}$ are $\Delta+\Z\delta$, with $\hat{\Delta}^+=\Delta^+\cup (\Delta+\Z_{>0}\delta)$. 
\subsubsection{Representations}Let $\Cc(\mathfrak{sp}_{2n})_k$ be the category of highest weight integrable $\widehat{\mathfrak{sp}}_{2n}$ modules of level $k$, see e.g. \cite[Chapter 10]{Kac}.
 The simple objects of $\Cc(\mathfrak{sp}_{2n})_k$ are labeled by partitions $\lambda \in I_{n,k}$. Write $\hat{\lambda}$ for the simple $\Cc(\mathfrak{sp}_{2n})_k$ module of highest weight $c_{n,k}(\lambda)$. 
We will also denote by $\underline{\lambda}$ the corresponding irreducible finite dimensional
module over $\mathfrak{sp}_{2n}$; thus the highest weight of $\underline{\lambda}$ is
$(k_1,\ldots,k_n)$ if $c_{n,k}(\lambda)=(k_0,k_1,\ldots,k_n)$.

\subsection{Orthogonal Lie algebra} \label{ortho}
Let $\mathfrak{so}_{N}$ be the Lie algebra of form preserving endomorphisms on an $N$-dimensional vector space equipped with a symmetric non-degenerate bilinear form and let $\widehat{\mathfrak{so}}_{N}$ be its affinization. We will use the same
notations as in Section \ref{symplectic} to discuss the  highest weight integrable representations
of $\widehat{\mathfrak{so}}_{N}$.
Recall that in the case when $N$ is divisible by 4 the simple objects of $\Cc(\mathfrak{so}_{N})_1$ are $\aLam_0, \aLam_1, \aLam_+, \aLam_-$
where $\fLam_0$ is the trivial $\mathfrak{so}_{N}-$module, $\fLam_1$ is the natural $\mathfrak{so}_{4nk}-$module
of dimension $N$, and $\fLam_\pm$ are two half-spinor $\mathfrak{so}_{N}-$modules. 

Now assume that $N=4nk$ and that the symmetric bilinear form on $\C^N$ is the tensor product
of the symplectic forms on the spaces $\C^{2n}$ and $\C^{2k}$. Thus we have an embedding 
$\mathfrak{sp}_{2n}\oplus \mathfrak{sp}_{2k}\subset \mathfrak{so}_{4nk}$ and we can
distinguish
between $\fLam_+$ and $\fLam_-$ in the following way: $\fLam_+$ has some nonzero
$\mathfrak{sp}_{2n}-$invariant vector while $\fLam_-$ has no nonzero $\mathfrak{sp}_{2n}-$invariant vectors, see
Remark \ref{ccfinite} below.

{\bf Warning:} Notation convention for $\aLam_\pm$ may change if we interchange the roles
of $\mathfrak{sp}_{2n}$ and $\mathfrak{sp}_{2k}$, see Remark \ref{ccfinite}.

\subsection{Conformal embeddings and Kac-Peterson formula}
It is a well known fact and a consequence of general results in \cite{KP} that the embedding
$\mathfrak{sp}_{2n}\oplus \mathfrak{sp}_{2k}\subset \mathfrak{so}_{4nk}$ induces 
a {\em conformal embedding}
\begin{equation}\label{tg}
(\widehat{\mathfrak{sp}}_{2n})_k\oplus (\widehat{\mathfrak{sp}}_{2k})_n\subset (\widehat{\mathfrak{so}}_{4nk})_1.
\end{equation}
This means that a restriction of a highest weight integrable $\widehat{\mathfrak{so}}_{4nk}-$module 
of level 1 to $\widehat{\mathfrak{sp}}_{2n}\oplus \widehat{\mathfrak{sp}}_{2k}$ is again a highest weight integrable module on which the central elements of $\widehat{\mathfrak{sp}}_{2n}$ and $\widehat{\mathfrak{sp}}_{2k}$ act as $k\mbox{Id}$ and $n\mbox{Id}$ respectively.

The following result is \cite[Proposition 2]{KP}. Since the proof is omitted in {\em loc. cit.} 
we provide its derivation from \cite[Proposition 1]{KP} in  the appendix (Section \ref{KPproof}).

\begin{prop}\label{KPcc}
There is an isomorphism of 
$(\widehat{\mathfrak{sp}}_{2n})_k\oplus (\widehat{\mathfrak{sp}}_{2k})_n-$modules:
$$\aLam_+\oplus \aLam_-\cong \bigoplus_{\lambda \in I_{n,k}}
\alam \boxtimes \widehat{\lambda^c}.$$
\end{prop}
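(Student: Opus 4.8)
The plan is to exploit the conformality of the embedding \eqref{tg}. For a conformal embedding the branching functions are constants rather than genuine $q$-series, so the restriction of any level-$1$ integrable $\widehat{\mathfrak{so}}_{4nk}$-module to $\widehat{\mathfrak{sp}}_{2n}\oplus\widehat{\mathfrak{sp}}_{2k}$ is a \emph{finite} direct sum $\bigoplus \alam\boxtimes\hat\mu$ of integrable modules of levels $k$ and $n$, and the Sugawara action inherited from $\widehat{\mathfrak{so}}_{4nk}$ equals the sum of the two Sugawara actions. First I would record the numerical constraints this imposes: from $h_\lambda=\langle\lambda,\lambda+2\rho\rangle/2(k+h^\vee)$ together with the additivity of central charges $c\big((\widehat{\mathfrak{sp}}_{2n})_k\big)+c\big((\widehat{\mathfrak{sp}}_{2k})_n\big)=c\big((\widehat{\mathfrak{so}}_{4nk})_1\big)=2nk$, which is exactly what conformality asserts, the half-spinor conformal weight is $nk/4$ and any constituent $\alam\boxtimes\hat\mu$ of $\aLam_+\oplus\aLam_-$ must have $h_\lambda+h_\mu\equiv nk/4\pmod{\Z}$. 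Because the branching functions are constant, the whole decomposition is then determined by the minimal-energy (``top'') subspaces.

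Next I would identify the constituents through the top subspace. The minimal $L_0$-eigenspace of $\aLam_+\oplus\aLam_-$ is the full spin representation $S$ of the finite-dimensional $\mathfrak{so}_{4nk}$, and the relevant embedding is the one coming from $\C^{4nk}=\C^{2n}\otimes\C^{2k}$ with the tensor product of the symplectic forms. The key input is therefore the classical, multiplicity-free branching of $S$ under $\mathfrak{sp}_{2n}\oplus\mathfrak{sp}_{2k}$, namely the Howe/seesaw-type decomposition $S\cong\bigoplus_{\lambda\in I_{n,k}}\flam\boxtimes\underline{\lambda^c}$; one sanity-checks this in the smallest case $n=k=1$, where $\mathfrak{so}_4\cong\mathfrak{sp}_2\oplus\mathfrak{sp}_2$ and the two summands match $I_{1,1}=\{(0),(1)\}$ against $\{(1),(0)\}$. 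Matching lowest weights, each finite-dimensional $\flam\boxtimes\underline{\lambda^c}$ is the top of a unique affine primary $\alam\boxtimes\widehat{\lambda^c}$, and since the branching functions are constants these top spaces exhaust the restriction, giving multiplicity one and the asserted index set $\{(\lambda,\lambda^c):\lambda\in I_{n,k}\}$.

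What remains is the combinatorial heart: verifying that the partition pairing produced above is exactly transpose-complement. Here I would cross-reference with \cite[Proposition 1]{KP}, whose branching data is phrased in terms of weights, and translate it into partition language using the bijections $c_{n,k},d_{n,k}$ of Section~\ref{combinatorics} together with the black/white-dots description of $\lambda\mapsto\lambda^c$; concretely one checks that the $\widehat{\mathfrak{sp}}_{2k}$-weight appearing opposite $\alam$ is $c_{k,n}(\lambda^c)$, which is precisely the statement that the dot diagram is read with the roles of black and white dots interchanged. Since the proposition concerns only the union $\aLam_+\oplus\aLam_-$, I can sidestep deciding which half-spinor each $\alam\boxtimes\widehat{\lambda^c}$ lands in (the content of the Warning and Remark~\ref{ccfinite}) and simply collect both spinor sectors together.

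The hard part will be two-fold. First, pinning down the finite-dimensional branching $S\cong\bigoplus_\lambda\flam\boxtimes\underline{\lambda^c}$ with the \emph{correct} transpose-complement pairing: this is where the specific symplectic-inside-orthogonal geometry of $\C^{2n}\otimes\C^{2k}$ enters, and an off-by-a-transpose (or off-by-a-complement) error is easy to introduce. Second, cleanly justifying that the top-space decomposition already exhausts the restriction, with no spurious higher-energy primaries; the conformality of \eqref{tg} is exactly the structural fact that makes this tractable, since it collapses the branching functions to constants and reduces the entire argument to a comparison of leading terms and conformal weights.
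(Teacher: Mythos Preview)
Your strategy diverges from the paper's and contains a genuine gap at precisely the step you flag as hardest. You assert that because the embedding is conformal ``these top spaces exhaust the restriction,'' but conformality (equivalently, constancy of the branching functions) only tells you that each integrable $\widehat{\mathfrak t}$-primary occurs with \emph{finite} multiplicity --- it does \emph{not} say that every highest-weight vector sits in the minimal-energy graded piece. Indeed, in the very same conformal embedding \eqref{tg} the decompositions of $\aLam_0$ and $\aLam_1$ contain primaries whose conformal dimensions are strictly larger than $h_{\Lambda_0}=0$ and $h_{\Lambda_1}=\tfrac12$ respectively; this is the last sentence of Remark~\ref{ccfinite}(ii). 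That for $\aLam_\pm$ all constituents happen to share the conformal weight $nk/4$ is a genuine \emph{output} of the computation, not a consequence of conformality, so your proposed reduction to the finite top space is circular as stated.

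The paper does not try to lift the finite branching $S=\bigoplus_{\lambda}\flam\boxtimes\underline{\lambda^c}$ upward. Instead it invokes \cite[Proposition~1]{KP} (Theorem~\ref{KP}(ii)) directly: the affine spinor decomposes as $\hat S\cong\bigoplus_{w\in\hat W_{\mathfrak t}^1}\hat L(w\hat\rho-\hat\rho_{\mathfrak t})$, so the problem is purely to determine the set $\hat W_{\mathfrak t}^1$ and compute the resulting weights. The substantive step is Proposition~\ref{affine}, which shows $\hat W_{\mathfrak t}^1=W_{\mathfrak t}^1$; its proof uses the type-$C$ feature that every long positive root of $\mathfrak{sp}_{2n+2k}$ already lies in $\Delta_{\mathfrak t}^+$, and this forces the translation part of any $w\in\hat W_{\mathfrak t}^1$ to vanish. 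That equality is \emph{exactly} equivalent to the ``no higher-energy primaries'' statement you need, but it is obtained by a root-system argument rather than a conformality/character argument. Once $\hat W_{\mathfrak t}^1=W_{\mathfrak t}^1$ is in hand, the black/white-dot bookkeeping you sketch in your third paragraph matches what the paper does in Lemma~\ref{permute} and the explicit computation of $s\rho-\rho_{\mathfrak t}$ that follows.
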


\subsection{Tensor categories}\label{tcat}
Recall that 
a monoidal category $(\Cc,\otimes)$ is \emph{braided} if there is a natural bifunctor
isomorphism $c_{X,Y} : X \otimes Y \to Y \otimes X$ called a braiding subject to the
hexagon axioms (see e.g \cite[Definition 8.1.1]{egnobook}). For every braided tensor category $(\Cc, \otimes, c)$, there is a
reversed braiding on $\Cc$ given by $c_{X,Y}^{rev} = c_{Y,X}^{-1}$. A braided tensor category
$\Cc$ endowed with the reversed braiding will be denoted $\Cc^{rev}$. A monoidal functor (a functor $T : \Cc \to \Dd$, together with a natural isormorphism $J:T(-\otimes-)\to T(-)\otimes T(-)$) is said to be a braided equivalence of categories if $c_{T(X),T(Y)}J_{X,Y}=J_{Y,X}T(c_{X,Y})$ and $T$ is an equivalence of the underlying categories (see e.g. \cite[Definition 8.1.7]{egnobook}). A \emph{braid--reversing} equivalence of $\Cc$ and $\Dd$ is a braided equivalence $\Cc \simeq \Dd^{rev}$. Two objects $X,Y$ in a braided tensor category $\Cc$ are said to mutually centralize each other, in the sense of \cite{Muger}, if $c_{X,Y}c_{Y,X}=\mbox{id}_{X\otimes Y}$. 

It is a deep and important fact (see e.g. \cite[Chapter 7]{BK}) that the categories
$\Cc(\mathfrak{sp}_{2n})_k$ and $\Cc(\mathfrak{so}_{4nk})_1$ have a natural structure 
of {\em modular tensor categories}. In particular they are braided rigid monoidal categories
which are {\em non-degenerate} in the sense of \cite[8.19]{egnobook}. We will often use
the following special property: in the categories $\Cc(\mathfrak{sp}_{2n})_k$ and $\Cc(\mathfrak{so}_{4nk})_1$ every object is self-dual.

It is well known that the category $\Cc(\mathfrak{sp}_{2n})_k$ is $\Z/2-$graded (see \cite[4.14]{egnobook});
the grading is given by the weights of an object modulo the root lattice. Thus the parity of the object $\alam$ is $|\lambda|\pmod{2}$.

Similarly the category $\Cc(\mathfrak{so}_{4nk})_1$ is graded by the quotient of the weight lattice
of $\mathfrak{so}_{4nk}$ modulo the root lattice; this group is known to be $\Z/2\oplus \Z/2$.
This implies that the category $\Cc(\mathfrak{so}_{4nk})_1$ is {\em pointed} (i.e. all simple objects are invertible) with group of simple objects isomorphic to
$\Z/2\oplus \Z/2$. The unit object of the category $\Cc(\mathfrak{so}_{4nk})_1$ is $\aLam_0$. It is a well known and easy to check fact that the object $\aLam_1\in \Cc(\mathfrak{so}_{4nk})_1$ is a {\em fermion}. This means that the braiding $c_{\aLam_1,\aLam_1}$ equals $-\mbox{id}_{\aLam_1,\aLam_1}$ or, equivalently, that the
subcategory of $\Cc(\mathfrak{so}_{4nk})_1$ generated by $\aLam_1$ is equivalent to the category of super vector spaces as a braided fusion category. This gives a simple construction of 
the category $\Cc^-$ which appears in the Introduction: let $\Cc=\Cc^0\oplus \Cc^1$ be a 
$\Z/2-$graded braided fusion category. Consider the subcategory $\Cc_0=\Cc^0\boxtimes \aLam_0\oplus \Cc^1\boxtimes \aLam_1$ of $\Cc \boxtimes \Cc(\mathfrak{so}_{4nk})_1$. It
is clear that $\Cc_0$ is closed under the tensor product; moreover we have a braided equivalence
$\Cc^-\simeq \Cc_0$ which sends $X\in \Cc_0$ to $X\boxtimes \aLam_0$ and $X\in \Cc_1$ to
$X\boxtimes \aLam_1$.

\subsection{\'Etale algebras from conformal embeddings}
An \'etale algebra in a semisimple braided tensor category $\Cc$ is defined to be an object
$A \in \Cc$ endowed with an associative commutative unital multiplication
and that the category $\Cc_A$ of right $A$-modules is semisimple, see \cite[
Definition 3.1]{KO2}. An \'etale algebra $A$ is called connected if the unit object
appears in $A$ with multiplicity 1. For a connected \'etale algebra $A \in\Cc$
the category $\Cc_A$ with operation $\otimes_A$ of tensor product over $A$ is naturally
a fusion category, see e.g. \cite[Section 3.3]{KO2}. The category $\Cc_A$ contains
a full tensor Serre subcategory $\Cc_A^{dys}$ of dyslectic modules, which is also
naturally braided. See for example \cite[Section 3.5]{KO2}. 

A general result observed in \cite[Theorem 5.2]{KO} states that for any conformal embedding the pullback of the vacuum module is an \'etale algebra (see \cite{HKL,CKM2} for a proof); moreover taking pullbacks is a braided equivalence with the category of dyslectic modules over this algebra, see \cite{CKM} for a proof. Specializing this to the conformal embedding (\ref{tg}) we get
 
 \begin{thm}\label{etale}
 Let $A$ be the restriction of $\hat{\Lambda}_0$ under the embedding (\ref{tg}).
Then A is a connected \'etale algebra in $ \Cc(\mathfrak{sp}_{2n})_k\boxtimes \Cc(\mathfrak{sp}_{2k})_n$. Moreover,
the restriction functor is a braided equivalence $\Cc(\mathfrak{so}_{4nk})_1\simeq  (\Cc(\mathfrak{sp}_{2n})_k\boxtimes \Cc(\mathfrak{sp}_{2k})_n)_A^{dys}$. Hence the pullbacks of $\aLam_1$, $\aLam_+$ and $\aLam_-$ are precisely the simple dyslectic $A$-modules in $\Cc(\mathfrak{sp}_{2n})_k\boxtimes \Cc(\mathfrak{sp}_{2k})_n$. 
\end{thm}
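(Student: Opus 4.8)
The plan is to obtain the statement as a direct specialization of the general result on conformal embeddings recalled just above, namely \cite[Theorem 5.2]{KO} together with the proofs supplied in \cite{HKL,CKM2,CKM}, applied to the embedding (\ref{tg}). At the level of vertex operator algebras, (\ref{tg}) corresponds to a conformal inclusion $U\subset V$, where $U$ is the VOA associated to $(\widehat{\mathfrak{sp}}_{2n})_k\oplus(\widehat{\mathfrak{sp}}_{2k})_n$ and $V$ is the VOA associated to $(\widehat{\mathfrak{so}}_{4nk})_1$. Since $U$ is a tensor product of the two affine VOAs, its representation category is the Deligne product $\Cc(\mathfrak{sp}_{2n})_k\boxtimes\Cc(\mathfrak{sp}_{2k})_n$, and the vacuum module $\aLam_0$ of $\Cc(\mathfrak{so}_{4nk})_1$, restricted to $U$, is exactly the algebra object $A$ encoding the extension $U\subset V$.

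First I would confirm that the hypotheses of the general result hold in the present setting. The categories $\Cc(\mathfrak{sp}_{2n})_k$, $\Cc(\mathfrak{sp}_{2k})_n$ and $\Cc(\mathfrak{so}_{4nk})_1$ are the modular tensor categories of WZW type, which are rigid, semisimple and non-degenerate, so the framework of \cite{HKL,CKM2,CKM} applies. Granting that (\ref{tg}) is a conformal embedding---a fact cited above as a consequence of \cite{KP}---the general result then gives at once that $A$ is an \'etale algebra in $\Cc(\mathfrak{sp}_{2n})_k\boxtimes\Cc(\mathfrak{sp}_{2k})_n$ and that the restriction functor is a braided equivalence of $\Cc(\mathfrak{so}_{4nk})_1$ with the category of dyslectic $A$-modules. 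Connectedness of $A$ is immediate, since a conformal inclusion fixes the vacuum vector, so the trivial $U$-module occurs in $A=\aLam_0|_U$ with multiplicity exactly one.

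It then remains only to read off the final assertion. Under any braided equivalence simple objects correspond to simple objects, and the unit $\aLam_0$ of $\Cc(\mathfrak{so}_{4nk})_1$ is sent to the unit $A$ of the module category. As recalled in Section~\ref{tcat}, the category $\Cc(\mathfrak{so}_{4nk})_1$ is pointed with precisely four simple objects $\aLam_0,\aLam_1,\aLam_+,\aLam_-$. Hence the equivalence matches the three nontrivial simples $\aLam_1,\aLam_+,\aLam_-$ bijectively with the three nontrivial simple dyslectic $A$-modules, which is exactly the claim; one may further cross-check consistency against the explicit branching of $\aLam_+\oplus\aLam_-$ provided in Proposition~\ref{KPcc}.

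Because the heavy lifting is done by the cited general theorems, the genuine content lies entirely in verifying their hypotheses for these specific categories. I expect this bookkeeping---in particular confirming that (\ref{tg}) is conformal and identifying $\mathrm{Rep}(U)$ with the Deligne product $\Cc(\mathfrak{sp}_{2n})_k\boxtimes\Cc(\mathfrak{sp}_{2k})_n$---to be the main, though essentially routine, obstacle; no new estimate or construction beyond the invoked machinery is required.
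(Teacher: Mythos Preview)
Your proposal is correct and matches the paper's approach exactly: the paper does not give a separate proof of this theorem but simply states it as the specialization of the general result \cite[Theorem~5.2]{KO} (with proofs in \cite{HKL,CKM2,CKM}) to the conformal embedding~(\ref{tg}). Your additional remarks on checking the hypotheses and reading off the last sentence from the fact that $\Cc(\mathfrak{so}_{4nk})_1$ has exactly four simple objects are appropriate elaborations of what the paper leaves implicit.
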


\subsection{Lagrangian algebras in products} We recall that a connected \'etale algebra $A$ in a non-degenerate braided fusion category $\Cc$ is {\em Lagrangian} if the category $\Cc_A^{dys}$ is
trivial or, equivalently, $\mbox{FPdim}(A)^2=\mbox{FPdim}(\Cc)$, see \cite[Definition 4.6]{KO2}. 

The following result is a version of \cite[Theorem 3.6]{DNO}.

\begin{thm} \label{equivalences}
Let $\Cc$ and $\Dd$ be two non-degenerate braided fusion categories and let $A\in \Cc \boxtimes \Dd$ be a Lagrangian algebra. Let $A_1=A\cap (\Cc \boxtimes {\bf 1})$ and let
$A_2=A\cap ({\bf 1}\boxtimes \Dd)$. Then $A_1\in \Cc$ and $A_2\in \Dd$ are connected \'etale algebras
and there exists a braid reversing equivalence $\phi: \Cc_{A_1}^{dys}\simeq \Dd_{A_2}^{dys}$
such that $A=\bigoplus_{M\in {\mbox{Irr}(\Cc_{A_1}^{dys})}}M^*\boxtimes \phi(M)$ as an object of
$\Cc \boxtimes \Dd$ (the summation is over the isomorphism classes of simple objects of
$\Cc_{A_1}^{dys}$).

Conversely, given two connected \'etale algebras $A_1\in \Cc$ and $A_2\in \Dd$ and 
a braid reversing equivalence $\phi: \Cc_{A_1}^{dys}\simeq \Dd_{A_2}^{dys}$, the object
$A=\bigoplus_{M\in {\mbox{Irr}(\Cc_{A_1}^{dys})}}M^*\boxtimes \phi(M)\in \Cc \boxtimes \Dd$
has the structure of a Lagrangian algebra.
\end{thm}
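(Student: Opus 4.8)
The plan is to reduce the general statement to the special case $A_1=\mathbf 1$, $A_2=\mathbf 1$, which is the classical description of a Lagrangian algebra in a tensor product of two non-degenerate braided categories as the graph of a braid--reversing equivalence. I will carry out the forward direction in four steps. First I check that $A_1=A\cap(\Cc\boxtimes\mathbf 1)$ and $A_2=A\cap(\mathbf 1\boxtimes\Dd)$ are connected \'etale algebras: connectedness is immediate since the unit $\mathbf 1\boxtimes\mathbf 1$ lies in both factors and occurs in $A$ with multiplicity one, while the subalgebra structure holds because $\Cc\boxtimes\mathbf 1$ is a fusion subcategory closed under $\otimes$, so the multiplication of $A$ restricted to $A_1\otimes A_1$ lands in $(\Cc\boxtimes\mathbf 1)\cap A=A_1$; commutativity and associativity are inherited from $A$, and separability (equivalently semisimplicity of the module category) is deduced from that of $A$, so $A_1,A_2$ are \'etale in the sense of \cite{KO2}.

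Second, $A_1\boxtimes A_2$ is a connected \'etale subalgebra of $A$, and the iterated \'etale algebra lemma (see \cite{KO2,DNO}) identifies $(\Cc\boxtimes\Dd)_{A_1\boxtimes A_2}^{dys}$ with the braided category $\Cc_{A_1}^{dys}\boxtimes\Dd_{A_2}^{dys}$ and exhibits $A$ as a connected \'etale algebra $\tilde A$ there with $(\Cc_{A_1}^{dys}\boxtimes\Dd_{A_2}^{dys})_{\tilde A}^{dys}\simeq(\Cc\boxtimes\Dd)_A^{dys}$; since $A$ is Lagrangian the right-hand side is trivial, so $\tilde A$ is Lagrangian. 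Third, by the maximality built into the definition of $A_1$ and $A_2$, the descended algebra has trivial one-sided intersections, $\tilde A\cap(\Cc_{A_1}^{dys}\boxtimes\mathbf 1)=\mathbf 1=\tilde A\cap(\mathbf 1\boxtimes\Dd_{A_2}^{dys})$, since any nontrivial such summand would pull back to enlarge $A_1$ or $A_2$. Fourth, I apply the base case: a Lagrangian algebra in a product of two non-degenerate braided categories with trivial one-sided intersections is the graph $\bigoplus_{M}M^*\boxtimes\phi(M)$ of a braided equivalence $\phi$, and because the ambient braiding on $\Cc_{A_1}^{dys}\boxtimes\Dd_{A_2}^{dys}$ is the (un-reversed) product braiding, commutativity of $\tilde A$ forces $\phi$ to be \emph{braid--reversing}. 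Composing with the forgetful functors to $\Cc$ and $\Dd$ yields the asserted decomposition of $A$.

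For the converse I run the same chain backwards: starting from $\phi$, the graph $\bigoplus_M M^*\boxtimes\phi(M)$ is a Lagrangian algebra $\tilde A$ in $\Cc_{A_1}^{dys}\boxtimes\Dd_{A_2}^{dys}$ whose commutativity and associativity encode exactly the braided, monoidal, braid--reversing structure of $\phi$; inducing $\tilde A$ up along the \'etale inclusion $A_1\boxtimes A_2$ produces a connected \'etale algebra $A$ in $\Cc\boxtimes\Dd$, and the iterated \'etale lemma again shows $(\Cc\boxtimes\Dd)_A^{dys}$ is trivial, i.e. $A$ is Lagrangian. The dimension bookkeeping $\FP(A)^2=\FP(A_1)^2\FP(A_2)^2\FP(\tilde A)^2=\FP(\Cc)\FP(\Dd)$ confirms this, and the object-level form $A=\bigoplus_M M^*\boxtimes\phi(M)$ in $\Cc\boxtimes\Dd$ is read off through the forgetful functors $\Cc_{A_1}\to\Cc$ and $\Dd_{A_2}\to\Dd$.

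The main obstacle is the second step: arranging the two-stage de-equivariantization so that the dyslectic module category over $A$ is recovered as the dyslectic module category over $\tilde A$, and verifying that $\tilde A$ genuinely lives in the \emph{braided} category $\Cc_{A_1}^{dys}\boxtimes\Dd_{A_2}^{dys}$ rather than merely in the fusion category $\Cc_{A_1}\boxtimes\Dd_{A_2}$. Equally delicate is tracking the braiding through this reduction in order to certify that $\phi$ reverses rather than preserves the braiding; this is precisely where the hypothesis that $\Cc$ and $\Dd$ appear with their given, un-reversed braidings is used.
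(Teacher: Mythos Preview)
The paper does not supply its own proof of this theorem: it is stated as ``a version of \cite[Theorem 3.6]{DNO}'' and invoked as a black box, with no argument given. So there is nothing in the paper to compare your proof against.

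That said, your outline is the standard route to this result and is essentially the argument underlying the cited reference. The reduction strategy---pass to dyslectic modules over the \'etale subalgebra $A_1\boxtimes A_2$ to land in $\Cc_{A_1}^{dys}\boxtimes\Dd_{A_2}^{dys}$, observe that the descended algebra $\tilde A$ is Lagrangian with trivial one-sided intersections, then invoke the base case identifying such algebras with graphs of braid-reversing equivalences---is exactly how \cite{DNO} proceeds. Your identification of the delicate points (that $\tilde A$ lands in the dyslectic subcategory rather than merely the module category, and that the sign of the braiding comes out as ``reversing'' because the ambient category is $\Cc\boxtimes\Dd$ rather than $\Cc\boxtimes\Dd^{rev}$) is accurate. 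One small gap worth flagging: your claim that separability of $A_1$ is ``deduced from that of $A$'' is true but not entirely automatic; one typically argues via the existence of a conditional expectation $A\to A_1$ splitting the inclusion, or equivalently checks that $\dim\Hom(\mathbf 1,A_1)=1$ together with rigidity forces $A_1$ to be separable in a fusion category over $\C$.
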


\begin{eg}\label{good equiv} 
Assume that $A\in \Cc \boxtimes \Dd$ is a Lagrangian algebra such that 
$A_1=A\cap (\Cc \boxtimes {\bf 1})={\bf 1}\boxtimes {\bf 1}$. Then Theorem \ref{equivalences}
says that there exists a braid reversing equivalence $\phi :\Cc \simeq \Dd_{A_2}^{dys}$.
If we write $A=\bigoplus_{X_i\in {\mbox{Irr}(\Cc)}}X_i^*\boxtimes Y_i$ then the functor
$\phi$ sends $X_i$ to $A_2-$module which is $Y_i$ as an object of $\Dd$.
\end{eg} 

\section{Branching rules}\label{branching}

In this Section we will present the branching rules for conformal embedding \eqref{tg}.

The unit object $\mathbf{1}_{n,k}$ of $\Cc(\mathfrak{sp}_{2n})_k$ corresponds to the empty Young diagram in $I_{n,k}$ and $\mathbf{1}_{n,k}^c$ is a unique nontrivial invertible object corresponding in $I_{k,n}$ to the unique diagram with $kn$ boxes. For $\lambda\in I_{n,k}$, tensoring in $\Cc(\mathfrak{sp}_{2k})_n$ is given by
$$\mathbf{1}_{n,k}^c \otimes \alam \cong \widehat{\lambda}^{tc},$$
see \cite{Fuchs}. We will just write $\mathbf1$ and $\mathbf1^c$ for convenience.

\begin{lem} \label{invcc}
The $A-$modules $(\mathbf{1}\boxtimes\mathbf{1}^c)\otimes A$ and $(\mathbf{1}^c \boxtimes\mathbf1)\otimes A$
are simple and dyslectic. Moreover there exists a sign $s=\pm$ such that
$(\mathbf{1}\boxtimes\mathbf1^c)\otimes A\cong \aLam_s$.
\end{lem}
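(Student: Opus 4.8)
The plan is to avoid verifying the local (dyslectic) condition by hand, and instead to identify each free module with the pullback of one of the simple objects $\aLam_\pm$, whose dyslecticity is guaranteed by Theorem \ref{etale}.

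First I would establish simplicity. Both $\mathbf{1}\boxtimes\mathbf{1}^c$ and $\mathbf{1}^c\boxtimes\mathbf{1}$ are invertible objects of $\Cc(\mathfrak{sp}_{2n})_k\boxtimes\Cc(\mathfrak{sp}_{2k})_n$, being external products of invertibles. For any invertible $X$, the free–forgetful adjunction gives
$$\End_A(X\otimes A)\cong \Hom(X,\, X\otimes A)\cong \Hom(\mathbf{1},\, A),$$
and the last space is one--dimensional because $A$ is connected. Hence $X\otimes A$ is indecomposable with scalar endomorphism algebra, so it is a simple $A$--module. This settles simplicity of both modules in the statement.

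Next I would locate $\mathbf{1}\boxtimes\mathbf{1}^c$ inside the branching of Proposition \ref{KPcc}. Since $\emptyset^c\in I_{k,n}$ is the full rectangle, the object $\mathbf{1}\boxtimes\mathbf{1}^c=\hat\emptyset\boxtimes\widehat{\emptyset^c}$ is precisely the $\lambda=\emptyset$ summand of $\bigoplus_{\lambda\in I_{n,k}}\alam\boxtimes\widehat{\lambda^c}\cong\aLam_+\oplus\aLam_-$. Thus $\mathbf{1}\boxtimes\mathbf{1}^c$ is a direct summand of the restriction of $\aLam_s$ for some $s=\pm$. By Frobenius reciprocity,
$$\Hom_A\bigl((\mathbf{1}\boxtimes\mathbf{1}^c)\otimes A,\ \aLam_s\bigr)\cong \Hom\bigl(\mathbf{1}\boxtimes\mathbf{1}^c,\ \aLam_s\bigr)\neq 0,$$
where on the right $\aLam_s$ denotes its image under the forgetful functor to the product category. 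Both $(\mathbf{1}\boxtimes\mathbf{1}^c)\otimes A$ and $\aLam_s$ are simple $A$--modules (the former by the previous paragraph, the latter by Theorem \ref{etale}), so any nonzero morphism between them is an isomorphism. As $\aLam_s$ is dyslectic, so is $(\mathbf{1}\boxtimes\mathbf{1}^c)\otimes A$, and this yields both the dyslecticity and the claimed isomorphism with sign $s$.

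Finally, rewriting $\bigoplus_{\lambda\in I_{n,k}}\alam\boxtimes\widehat{\lambda^c}$ through the substitution $\mu=\lambda^c$ shows that the full--rectangle term $\lambda=(k,\dots,k)$ contributes exactly $\mathbf{1}^c\boxtimes\mathbf{1}$ (whose complement is $\emptyset$), so the identical argument gives that $(\mathbf{1}^c\boxtimes\mathbf{1})\otimes A$ is simple and dyslectic. The only genuine subtlety is the combinatorial bookkeeping that pins down these two external products as the $\lambda=\emptyset$ and $\lambda=(k,\dots,k)$ terms of the branching; once that is in place, everything else is a formal consequence of connectedness of $A$, Schur's lemma, and Theorem \ref{etale}. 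I expect that no direct computation of the monodromy/local condition will be required, which is precisely the advantage of routing through the already--known simple dyslectic modules.
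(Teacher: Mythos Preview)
Your proof is correct and follows essentially the same route as the paper: both arguments observe via Proposition \ref{KPcc} that $\mathbf{1}\boxtimes\mathbf{1}^c$ and $\mathbf{1}^c\boxtimes\mathbf{1}$ occur as summands of $\aLam_+\oplus\aLam_-$ and then invoke Theorem \ref{etale}. You have simply made explicit the steps the paper leaves to the reader, namely the simplicity of the free modules (via connectedness of $A$ and invertibility) and the passage from a nonzero Frobenius-reciprocity Hom to an isomorphism by Schur's lemma.
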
 
\begin{proof}
By Proposition \ref{KPcc},
$\mathbf{1}\boxtimes \mathbf{1}^c$ and $\mathbf{1}^c \boxtimes \be$ are direct summands of
$\aLam_+\oplus \aLam_-$.  Hence the result follows from Theorem \ref{etale}. 
\end{proof}

{\bf Warning:} We do not claim that these two modules are (or are not) isomorphic. We will
see that this depends on the values of $n$ and $k$.

Recall that the category $\Cc(\mathfrak{sp}_{2n})_k$ is $\Z/2-$graded. We denote
by $\Cc(\mathfrak{sp}_{2n})_k^0$ and $\Cc(\mathfrak{sp}_{2n})_k^1$ its trivial and nontrivial 
components. Recall that $\Cc(\mathfrak{sp}_{2n})_k^0$ coincides with the centralizer of the object
$\mathbf1^c$.

\begin{cor}\label{paircc} (i) The object $\aLam_0=A$ is contained in $\Cc(\mathfrak{sp}_{2n})_k^0\boxtimes \Cc(\mathfrak{sp}_{2k})_n^0$.

(ii) The object $\aLam_1$ is contained in $\Cc(\mathfrak{sp}_{2n})_k^1\boxtimes \Cc(\mathfrak{sp}_{2k})_n^1$.
\end{cor}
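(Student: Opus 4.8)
The plan is to compare the $\Z/2\times\Z/2$ grading of the product category $\Cc(\ssp_{2n})_k\boxtimes\Cc(\ssp_{2k})_n$, in which $\alam\boxtimes\widehat{\mu}$ has degree $(|\lambda|\bmod 2,\,|\mu|\bmod 2)$, with the $\Z/2\oplus\Z/2$ grading of $\Cc(\sso_{4nk})_1$, using the restriction functor $R$ of Theorem~\ref{etale}. Both gradings are of the form ``weight lattice modulo root lattice'' (see Section~\ref{tcat}), so the first step is to verify that $R$ is a \emph{graded} tensor functor. This reduces to the check that the weight-restriction map induced by $\ssp_{2n}\oplus\ssp_{2k}\subset\sso_{4nk}$ carries the root lattice of $\sso_{4nk}$ into that of $\ssp_{2n}\oplus\ssp_{2k}$: for the tensor-product form each root of $\sso_{4nk}$ restricts to a sum of two weights each of even coordinate sum in the symplectic bases of Section~\ref{symplectic}, hence into the respective root lattices. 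Consequently the restriction descends to a group homomorphism $\psi\colon\Z/2\oplus\Z/2\to\Z/2\times\Z/2$; equivalently, $R(X)$ is homogeneous for every simple $X$ of $\Cc(\sso_{4nk})_1$, and I write $\psi(X)$ for its degree. Establishing this grading compatibility of $R$ is the technical heart and the step I expect to be the main obstacle.

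Granting it, part (i) is immediate. The unit $\aLam_0$ lies in the trivial degree, so $R(\aLam_0)=A$ is homogeneous with $\psi(\aLam_0)=(0,0)$; thus every simple summand $\alam\boxtimes\widehat{\mu}$ of $A$ has $|\lambda|$ and $|\mu|$ even, i.e. $A\in\Cc(\ssp_{2n})_k^0\boxtimes\Cc(\ssp_{2k})_n^0$.

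For part (ii) I would first locate the spinors via Proposition~\ref{KPcc}. Its constituents $\alam\boxtimes\widehat{\lambda^c}$ have degree $(|\lambda|,\,|\lambda^c|)\bmod 2=(|\lambda|,\,nk-|\lambda|)\bmod 2$, since $|\lambda|+|\lambda^c|=nk$. As $n,k\ge 1$, both parities of $|\lambda|$ occur among $\lambda\in I_{n,k}$, so exactly two distinct degrees appear, and any two degrees of the form $(e,nk-e)$ and $(o,nk-o)$ with $e$ even and $o$ odd sum to $(1,1)$. Because $\aLam_+$ and $\aLam_-$ are simple, hence $R(\aLam_\pm)$ are homogeneous, while together their restrictions realize both degrees, we must have $\psi(\aLam_+)\ne\psi(\aLam_-)$ with $\{\psi(\aLam_+),\psi(\aLam_-)\}$ equal to these two degrees; in particular $\psi(\aLam_+)+\psi(\aLam_-)=(1,1)$.

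It remains to bring in the fusion rules of the pointed category $\Cc(\sso_{4nk})_1$: its invertible objects form $\Z/2\oplus\Z/2$ with unit $\aLam_0$, so the three nontrivial simples satisfy $\aLam_+\otimes\aLam_-\cong\aLam_1$. Since $R$ is a tensor functor, degrees add, giving $\psi(\aLam_1)=\psi(\aLam_+)+\psi(\aLam_-)=(1,1)$. Hence every constituent of $\aLam_1$ lies in $\Cc(\ssp_{2n})_k^1\boxtimes\Cc(\ssp_{2k})_n^1$, which is (ii). Note the computation is uniform in the parity of $nk$, so once the grading compatibility of $R$ is in hand no case analysis is required.
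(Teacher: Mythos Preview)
Your argument is correct, but it takes a different path from the paper's.

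For (i), the paper does not compute anything about root lattices. Instead it invokes Lemma~\ref{invcc}: since $(\mathbf{1}\boxtimes\mathbf{1}^c)\otimes A$ and $(\mathbf{1}^c\boxtimes\mathbf{1})\otimes A$ are simple dyslectic $A$-modules, a general result on \'etale algebras (\cite[Lemma~3.15]{DGNO}) forces $A$ to centralize both $\mathbf{1}\boxtimes\mathbf{1}^c$ and $\mathbf{1}^c\boxtimes\mathbf{1}$; as $\Cc(\ssp_{2n})_k^0$ and $\Cc(\ssp_{2k})_n^0$ are precisely the centralizers of these invertible objects, (i) follows. Your approach replaces this categorical step with the Lie-theoretic observation that the weight-restriction for $\ssp_{2n}\oplus\ssp_{2k}\subset\sso_{4nk}$ carries the $D_{2nk}$ root lattice into the product of the $C_n$ and $C_k$ root lattices, making the restriction functor graded. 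This is a perfectly valid alternative and is in some sense more elementary, since it bypasses Lemma~\ref{invcc} and the dyslectic-module machinery; on the other hand the paper's route keeps everything internal to the tensor-category framework already set up.

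For (ii), the paper's argument is considerably shorter than yours: it simply notes that $\aLam_1$ contains $\alam_1\boxtimes\alam_1$ (the natural representation $\fLam_1$ of $\sso_{4nk}$ is $\C^{2n}\otimes\C^{2k}$), so $\aLam_1$ is a summand of $(\alam_1\boxtimes\alam_1)\otimes A$, and by (i) this lies in degree $(1,1)$. Your detour through Proposition~\ref{KPcc} and the fusion rule $\aLam_+\otimes\aLam_-\cong\aLam_1$ is correct and pleasantly uniform in the parity of $nk$, but the paper's one-line observation about the natural module is the more direct way to reach the same conclusion.
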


\begin{proof} By Lemma \ref{invcc} and \cite[Lemma 3.15]{DGNO} $A$ centralizes $\mathbf{1}\boxtimes\be^c$ and $\be^c \boxtimes \be$. This implies (i). It is clear that
$\aLam_1$ contains $\alam_1\boxtimes\alam_1$ where $\alam_1$ is the one box Young diagram;
hence $\aLam_1$ is a direct summand of $(\alam_1\boxtimes\alam_1)\otimes A$. Thus (i) implies (ii).
\end{proof}

We arrive at the main result of this section:
\begin{thm}[cf \cite{Has}]\label{maincc}
There are isomorphisms of 
$(\widehat{\mathfrak{sp}}_{2n})_k\oplus (\widehat{\mathfrak{sp}}_{2k})_n-$modules:
$$\aLam_0\cong \bigoplus_{\lambda \in I_{n,k}^0}\alam \boxtimes \alamt ,$$
$$\aLam_1\cong \bigoplus_{\lambda \in I_{n,k}^1}\alam \boxtimes \alamt ,$$
$$\aLam_+\cong \bigoplus_{\lambda \in I_{n,k}^0}\alam \boxtimes \widehat{\lambda^c},$$
$$\aLam_-\cong \bigoplus_{\lambda \in I_{n,k}^1}\alam \boxtimes \widehat{\lambda^c}.$$

\end{thm}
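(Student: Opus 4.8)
The plan is to start from Proposition \ref{KPcc}, which already supplies the spinor sum
$\aLam_+\oplus\aLam_-\cong\bigoplus_{\lambda\in I_{n,k}}\alam\boxtimes\widehat{\lambda^c}$, and to extract the four individual branchings in two stages: first separate this sum into $\aLam_+$ and $\aLam_-$, and then obtain $\aLam_0$ and $\aLam_1$ from them by tensoring with the invertible object $\mathbf{1}\boxtimes\mathbf{1}^c$. The third and fourth isomorphisms come out of the first stage, and the first two out of the second.

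For the first stage I would use that $\Cc(\sso_{4nk})_1$ is $\Z/2\oplus\Z/2$-graded and that restriction along (\ref{tg}) is compatible with the $\Z/2\times\Z/2$-grading of $\Cc(\ssp_{2n})_k\boxtimes\Cc(\ssp_{2k})_n$ given by $(|\lambda|,|\mu|)\bmod 2$. Since $|\lambda^c|=nk-|\lambda|$, every summand $\alam\boxtimes\widehat{\lambda^c}$ with $|\lambda|$ even lies in bidegree $(0,nk)$ and every summand with $|\lambda|$ odd lies in bidegree $(1,nk+1)$; as these differ in the first coordinate, the simple (hence homogeneous) objects $\aLam_+,\aLam_-$ must be exactly the even and odd subsums. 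To decide which is which I would invoke the characterization of Section \ref{ortho}: $\fLam_+$ carries a nonzero $\ssp_{2n}$-invariant vector while $\fLam_-$ does not. The even subsum contains the term $\lambda=\emptyset$, namely $\mathbf{1}\boxtimes\mathbf{1}^c$, whose $\ssp_{2n}$-ground state is the trivial module and hence invariant; therefore $\aLam_+\cong\bigoplus_{\lambda\in I_{n,k}^0}\alam\boxtimes\widehat{\lambda^c}$ and $\aLam_-\cong\bigoplus_{\lambda\in I_{n,k}^1}\alam\boxtimes\widehat{\lambda^c}$.

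For the second stage I would pass to the fusion ring of $\Cc(\sso_{4nk})_1$, which for type $D_{2nk}$ at level $1$ is the Klein four-group; in particular $\aLam_+\otimes\aLam_+\cong\aLam_0$ and $\aLam_+\otimes\aLam_-\cong\aLam_1$. By Theorem \ref{etale} the restriction functor is a braided tensor equivalence onto $(\Cc(\ssp_{2n})_k\boxtimes\Cc(\ssp_{2k})_n)_A^{dys}$, so these fusions become products $\otimes_A$ of dyslectic modules. By Lemma \ref{invcc}, with the sign fixed to $s=+$ above, the dyslectic module $\aLam_+$ is the free module $(\mathbf{1}\boxtimes\mathbf{1}^c)\otimes A$, and the projection formula gives $\bigl((\mathbf{1}\boxtimes\mathbf{1}^c)\otimes A\bigr)\otimes_A M\cong(\mathbf{1}\boxtimes\mathbf{1}^c)\otimes M$ as objects for every $A$-module $M$. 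Hence $\aLam_0\cong(\mathbf{1}\boxtimes\mathbf{1}^c)\otimes\aLam_+$ and $\aLam_1\cong(\mathbf{1}\boxtimes\mathbf{1}^c)\otimes\aLam_-$ as objects of $\Cc(\ssp_{2n})_k\boxtimes\Cc(\ssp_{2k})_n$ (the former is also immediate since $\mathbf{1}\boxtimes\mathbf{1}^c$ is self-inverse, so $A=(\mathbf{1}\boxtimes\mathbf{1}^c)\otimes\aLam_+$). I would then simplify using the fusion rule $\mathbf{1}^c\otimes\widehat{\mu}\cong\widehat{\mu^{tc}}$ in $\Cc(\ssp_{2k})_n$ together with the combinatorial identity $(\lambda^c)^{tc}=\lambda^t$, which follows from the involutivity of $c$ and the commutation of complementation with transposition; this yields $\mathbf{1}^c\otimes\widehat{\lambda^c}\cong\alamt$ and hence the first two isomorphisms. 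The resulting parities agree with Corollary \ref{paircc}, providing a consistency check.

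The main obstacle I anticipate is not the categorical bookkeeping of the second stage but the sign determination in the first: correctly matching the invariant-vector characterization of $\fLam_+$, which is phrased at the level of finite-dimensional $\sso_{4nk}$ ground states, with the $\lambda=\emptyset$ term of the affine branching, and keeping the labeling of $\aLam_\pm$ coherent in light of the Warning following Section \ref{ortho}. Everything downstream, including the identification $\aLam_+=(\mathbf{1}\boxtimes\mathbf{1}^c)\otimes A$ used to produce $\aLam_0$ and $\aLam_1$, depends on getting this sign right.
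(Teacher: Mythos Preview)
Your approach is essentially the paper's, run in reverse order: the paper first tensors the spinor sum with $\mathbf{1}\boxtimes\mathbf{1}^c$ to produce $\aLam_0\oplus\aLam_1$, separates these by parity via Corollary~\ref{paircc}, then reads off $\aLam_s=(\mathbf{1}\boxtimes\mathbf{1}^c)\otimes A$ and $\aLam_{-s}$, and only at the end fixes $s=+$ by Remark~\ref{ccfinite}; you separate $\aLam_+\oplus\aLam_-$ first, fix the sign, and then tensor. The ingredients (Proposition~\ref{KPcc}, Lemma~\ref{invcc}, Theorem~\ref{etale}, the rule $\mathbf{1}^c\otimes\widehat{\lambda^c}\cong\alamt$, and the invariant-vector identification of $\aLam_+$) are identical.

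One point to tighten: your first-stage separation rests on the claim that each simple dyslectic $A$-module is homogeneous for the $\Z/2\times\Z/2$-grading of $\Cc(\ssp_{2n})_k\boxtimes\Cc(\ssp_{2k})_n$. That is true because $A$ itself lies in bidegree $(0,0)$, so the homogeneous components of any $A$-module are $A$-submodules---but this is exactly the content of Corollary~\ref{paircc}(i), which you should invoke at the outset rather than relegate to a ``consistency check''. The paper avoids this circularity by separating $\aLam_0$ and $\aLam_1$ first (where Corollary~\ref{paircc} applies directly), and only then deducing the splitting of $\aLam_\pm$ from the known form of $A=\aLam_0$. Your identification of the sign via the $\lambda=\emptyset$ summand is correct and is precisely what the paper does through Remark~\ref{ccfinite}(ii); as you note, passing from the finite-dimensional characterization of $\fLam_+$ to the affine $\aLam_+$ uses that all summands of $\aLam_\pm$ share the same conformal dimension.
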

\begin{proof}
Let $s=\pm$ be as in Lemma \ref{invcc}.
The multiplication rules in the group $\Z/2\oplus \Z/2$ imply that 
$$\aLam_s\otimes (\aLam_+\oplus \aLam_-)\cong \aLam_0\oplus \aLam_1,$$
where the tensor product is taken in the category $\Cc(\mathfrak{so}_{4nk})_1$. Using Theorem \ref{etale},
the definition of $s$, and Proposition \ref{KPcc} we have equivalently
$$\begin{aligned}\aLam_0\oplus \aLam_1&\cong \aLam_s\otimes_A (\aLam_+\oplus \aLam_-)\\
&\cong((\mathbf{1}\boxtimes{\mathbf1}^c)\otimes A)\otimes_A\bigoplus_{\lambda \in I_{n,k}}(\alam \boxtimes\alam^c)\\
&\cong(\mathbf{1}\boxtimes{\mathbf1}^c)\otimes\bigoplus_{\lambda \in I_{n,k}}(\alam \boxtimes\alam^c)\\
&\cong\bigoplus_{\lambda \in I_{n,k}}\alam \boxtimes(\mathbf{1}^c \otimes \alam^c)\\
&\cong\bigoplus_{\lambda \in I_{n,k}}\alam \boxtimes\alamt,\end{aligned}$$
where the tensor product in the third and fourth line is taken in $\Cc(\mathfrak{sp}_{2n})_k\boxtimes\Cc(\mathfrak{sp}_{2k})_n$ and in 
$\Cc(\mathfrak{sp}_{2k})_n$ respectively. Using Corollary \ref{paircc} we get the required decompositions of $\aLam_0$ and
$\aLam_1$. Now by definition of $s$ we get
$\aLam_s=(\mathbf{1}\boxtimes\mathbf{1}^c)\otimes A\cong \bigoplus_{\lambda \in I_{n,k}^0}\alam \boxtimes \widehat{\lambda^c}$ and therefore $\aLam_{-s}=\bigoplus_{\lambda \in I_{n,k}^1}\alam \boxtimes \widehat{\lambda^c}$. Then by Remark \ref{ccfinite} we see that $s=+$
and the result follows.
\end{proof}
\begin{rem} \label{ccfinite} 
(i) We see that $\mathbf{1}^c \boxtimes \be$ appears
in the decomposition of the same $\aLam_+$ if and only if $nk$ is even. Thus the modules
$(\mathbf{1}\boxtimes\mathbf{1}^c)\otimes A$ and $(\mathbf{1}^c \boxtimes\mathbf1)\otimes A$
from Lemma \ref{invcc} are isomorphic if and only if $nk$ is even.

(ii) It was observed by Hasegawa in \cite{Has} that all summands in the decompositions of $\aLam_\pm$ have the same {\em conformal dimension}. It follows that we have similar decompositions
$$\fLam_{\, +}\cong \bigoplus_{\lambda \in I_{n,k}^0}\flam \boxtimes \underline{\lambda^c},\;\;\;\;
\fLam_{\, -}\cong \bigoplus_{\lambda \in I_{n,k}^1}\flam \boxtimes \underline{\lambda^c}$$
for the branching under the finite dimensional algebras embedding $\mathfrak{sp}_{2n}\oplus \mathfrak{sp}_{2k}\subset \mathfrak{so}_{4nk}$ (see Proposition \ref{affine} for a closely related statement). In particular the summand of $\fLam_{\, +}$
corresponding to the empty Young diagram contains vectors invariant under the action of
$\mathfrak{sp}_{2n}$ which justifies our choice for labeling of $\fLam_\pm$ in Section \ref{ortho}.
Note that the conformal dimensions of the summands of $\aLam_0$ and $\aLam_1$ are not constant.
\end{rem}

\section{Level rank duality}\label{main results}
In this section we prove the main result of this paper. Recall the construction of the category
$\mathcal{C}(\mathfrak{sp}_{2n})_k^-$ from the introduction.

\begin{thm}\label{main theorem} There is a braid-reversing monoidal equivalence $$\mathcal{C}(\mathfrak{sp}_{2n})_k\simeq\mathcal{C}(\mathfrak{sp}_{2k})_n^{-}$$
sending
$\widehat{\lambda}\mapsto\widehat{\lambda^t}$.
In particular, $\mathcal{C}(\mathfrak{sp}_{2n})_k^0$ and $\mathcal{C}(\mathfrak{sp}_{2k})_n^0$ are braid reversing equivalent.
\end{thm}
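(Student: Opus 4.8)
The plan is to exhibit the desired functor as the braid-reversing equivalence attached to a Lagrangian algebra, via Theorem~\ref{equivalences} and Example~\ref{good equiv}. Working in the non-degenerate braided fusion category $\Cc(\ssp_{2n})_k\boxtimes\Cc(\ssp_{2k})_n^-$, I would consider the object
$$\tilde A:=\bigoplus_{\lambda\in I_{n,k}}\alam\boxtimes\alamt,$$
the second tensor factor being read inside $\Cc(\ssp_{2k})_n^-$. If $\tilde A$ can be shown to carry the structure of a connected Lagrangian algebra, then, since every object here is self-dual so that $\alam^*\cong\alam$, Example~\ref{good equiv} applies as soon as $\tilde A\cap(\Cc(\ssp_{2n})_k\boxtimes\mathbf1)=\mathbf1\boxtimes\mathbf1$, and it produces a braid-reversing equivalence $\Cc(\ssp_{2n})_k\xrightarrow{\sim}(\Cc(\ssp_{2k})_n^-)_{\tilde A_2}^{dys}$ sending $\alam$ to $\alamt$.

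The heart of the argument, and the step I expect to be the main obstacle, is that $\tilde A$ really is a connected \'etale (in particular commutative) algebra. For this I would use the realization of $\Cc(\ssp_{2k})_n^-$ from Section~\ref{tcat} as the subcategory $\Cc(\ssp_{2k})_n^0\boxtimes\aLam_0\oplus\Cc(\ssp_{2k})_n^1\boxtimes\aLam_1$ of $\Cc(\ssp_{2k})_n\boxtimes\Cc(\sso_{4nk})_1$. Splitting the sum over $I_{n,k}$ into its even and odd parts and invoking the branching rules of Theorem~\ref{maincc}, this realization carries $\tilde A$ to
$$\aLam_0\boxtimes\aLam_0\oplus\aLam_1\boxtimes\aLam_1\in\bigl(\Cc(\ssp_{2n})_k\boxtimes\Cc(\ssp_{2k})_n\bigr)\boxtimes\Cc(\sso_{4nk})_1,$$
the group algebra of the diagonal $\Z/2$ generated by the pair of fermions $(\aLam_1,\aLam_1)$. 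The associative unital structure is just that of this $\Z/2$-group algebra; commutativity, which fails for $\aLam_0\oplus\aLam_1$ in the untwisted product precisely by the fermionic sign $c_{\aLam_1,\aLam_1}=-\mathrm{id}$ on the odd--odd block, is now restored because the self-braiding of $(\aLam_1,\aLam_1)$ is $c_{\aLam_1,\aLam_1}\otimes c_{\aLam_1,\aLam_1}=(-1)(-1)=+1$. This cancellation is exactly the role of the $(-)$-twist. The point needing care is that the first-factor copies of $\aLam_0,\aLam_1$ are the dyslectic $A$-modules of Theorem~\ref{etale}, so the relevant braidings must be tracked through the equivalence $\Cc(\sso_{4nk})_1\simeq(\Cc(\ssp_{2n})_k\boxtimes\Cc(\ssp_{2k})_n)_A^{dys}$; semisimplicity of the module category follows since the transported algebra is a group algebra of a finite group.

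For the Lagrangian property I would avoid a direct dimension count and instead use that $A=\aLam_0$ is already \'etale with $(\Cc(\ssp_{2n})_k\boxtimes\Cc(\ssp_{2k})_n)_A^{dys}\simeq\Cc(\sso_{4nk})_1$ of Frobenius--Perron dimension $4$; this gives $\FP(A)^2=\FP(\Cc(\ssp_{2n})_k\boxtimes\Cc(\ssp_{2k})_n)/4$, while $\FP(\aLam_1)=\FP(\aLam_0)=\FP(A)$ because the $\aLam_i$ are invertible as dyslectic modules. Hence $\FP(\tilde A)=2\FP(A)$ and $\FP(\tilde A)^2=\FP(\Cc(\ssp_{2n})_k\boxtimes\Cc(\ssp_{2k})_n^-)$, so $\tilde A$ is Lagrangian. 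The only summand of $\tilde A$ in $\Cc(\ssp_{2n})_k\boxtimes\mathbf1$ (respectively in $\mathbf1\boxtimes\Cc(\ssp_{2k})_n^-$) comes from $\lambda=\emptyset$, so $\tilde A_1=\tilde A_2=\mathbf1\boxtimes\mathbf1$; thus $(\Cc(\ssp_{2k})_n^-)_{\tilde A_2}^{dys}=\Cc(\ssp_{2k})_n^-$ and Example~\ref{good equiv} yields the braid-reversing equivalence $\alam\mapsto\alamt$. Finally, for the last assertion I would observe that the $(-)$-twist alters the braiding only on the odd--odd sector, hence is trivial on the even part; restricting the equivalence to $\Cc(\ssp_{2n})_k^0$ (the $\alam$ with $|\lambda|$ even), whose image consists of the $\alamt$ with $|\lambda^t|=|\lambda|$ even, then gives the braid-reversing equivalence $\Cc(\ssp_{2n})_k^0\simeq\Cc(\ssp_{2k})_n^0$.
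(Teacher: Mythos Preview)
Your argument is correct and uses the same core ingredients as the paper's first proof---the branching rules of Theorem~\ref{maincc}, the \'etale algebra $A$ of Theorem~\ref{etale}, and the Lagrangian-algebra machinery of Theorem~\ref{equivalences}---but the organization is genuinely different. The paper first invokes the \emph{converse} of Theorem~\ref{equivalences} to produce a Lagrangian algebra $\tilde A$ in the full triple product $\Cc(\ssp_{2n})_k\boxtimes\Cc(\ssp_{2k})_n\boxtimes\Cc(\sso_{4nk})_1^{rev}$ (so the Lagrangian structure comes for free from the braided equivalence of Theorem~\ref{etale}), and then applies the forward direction with the bipartition $\Cc=\Cc(\ssp_{2n})_k$, $\Dd=\Cc(\ssp_{2k})_n\boxtimes\Cc(\sso_{4nk})_1^{rev}$; only at the very end is $\Dd_B^{dys}$ identified with $\Cc(\ssp_{2k})_n^{-}$ via the free-module functor on the diagonal subcategory $\Dd_0$. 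You instead work from the outset inside the smaller category $\Cc(\ssp_{2n})_k\boxtimes\Cc(\ssp_{2k})_n^{-}$ and build the Lagrangian there directly, appealing (implicitly) to the principle that an \'etale algebra in $\Cc_A^{dys}$ is \'etale in $\Cc$ to transport the diagonal $\Z/2$ group-algebra structure from $\Cc(\sso_{4nk})_1\boxtimes\Cc(\sso_{4nk})_1$ back through Theorem~\ref{etale}. Your route is more economical (one application of Theorem~\ref{equivalences} instead of two, and no auxiliary algebra $B$), at the cost of having to justify the ``\'etale over \'etale'' step and check that the resulting algebra actually lands in the subcategory $\Cc(\ssp_{2n})_k\boxtimes\Cc(\ssp_{2k})_n^{-}$ (which it does, since $|\lambda^t|=|\lambda|$). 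The paper's route buys the \'etale/Lagrangian structure without any such bookkeeping, since it is packaged into the converse direction of Theorem~\ref{equivalences}.
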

\subsection{First proof}
\begin{proof} By Theorems \ref{etale} and \ref{maincc} we see that an object $$A=\bigoplus_{\lambda \in I_{n,k}^0}\alam \boxtimes \alamt \in \Cc(\mathfrak{sp}_{2n})_k\boxtimes \Cc(\mathfrak{sp}_{2k})_n$$ has the structure of a connected \'etale algebra 
such that we have a braided equivalence $(\Cc(\mathfrak{sp}_{2n})_k\boxtimes \Cc(\mathfrak{sp}_{2k})_n)_A^{dys}\simeq \Cc(\mathfrak{so}_{4nk})_1$. Thus
by Theorem \ref{equivalences} we get a Lagrangian algebra
 $$\tilde{A}\in \mathcal{C}(\mathfrak{sp}_{2n})_k\boxtimes\mathcal{C}(\mathfrak{sp}_{2k})_n\boxtimes\mathcal{C}(\mathfrak{so}_{4nk})_1^{rev}$$
  given by
$$\left(\bigoplus_{\lambda\in I_{n,k}^0}\hat{\lambda}\boxtimes\hat{\lambda^t}\boxtimes\hat{\Lambda}_0\right)\oplus\left(\bigoplus_{\lambda\in I_{n,k}^1}\hat{\lambda}\boxtimes\hat{\lambda^t}\boxtimes\hat{\Lambda}_1\right)$$
$$\oplus\left(\bigoplus_{\lambda\in I_{n,k}^0}\hat{\lambda}\boxtimes\hat{\lambda^c}\boxtimes\hat{\Lambda}_+\right)\oplus\left(\bigoplus_{\lambda\in I_{n,k}^1}\hat{\lambda}\boxtimes\hat{\lambda^c}\boxtimes\hat{\Lambda}_{-}\right).$$

Let us use Theorem \ref{equivalences} again with $\Cc =\mathcal{C}(\mathfrak{sp}_{2n})_k$
and $\Dd =\mathcal{C}(\mathfrak{sp}_{2k})_n\boxtimes\mathcal{C}(\mathfrak{so}_{4nk})_1^{rev}$.
We have $\tilde A\cap (\Cc \boxtimes {\bf 1})={\bf 1}\boxtimes {\bf 1}\boxtimes \aLam_0$ and
$\tilde A\cap ({\bf 1}\boxtimes \Dd)={\bf 1}\boxtimes {\bf 1}\boxtimes \aLam_0\oplus {\bf 1}\boxtimes {\bf 1}^c\boxtimes \aLam_+=:B$ (note that the second summand is invertible of order 2). Thus by Example \ref{good equiv} we have a braid reversing equivalence
$\Cc \simeq \Dd_B^{dys}$ sending $\alam$ to the $B-$module $\alam^t\boxtimes \aLam_0\oplus
\alam^c\boxtimes \aLam_+=(\alam^t\boxtimes \aLam_0)\otimes B$ if $|\lambda|$ is even and to $\alam^t\boxtimes \aLam_1\oplus
\alam^c\boxtimes \aLam_-=(\alam^t\boxtimes \aLam_1)\otimes B$ if $|\lambda|$ is odd. 

Now consider the additive subcategory $\Dd_0$ of $\Dd =\mathcal{C}(\mathfrak{sp}_{2k})_n\boxtimes\mathcal{C}(\mathfrak{so}_{4nk})_1^{rev}$ generated by the objects
$\alam \boxtimes \aLam_0$ with $\lambda \in I_{k,n}^0$ and $\alam \boxtimes \aLam_1$ with $\lambda \in I_{k,n}^1$. It is clear that $\Dd_0$ is fusion subcategory of $\Dd$; moreover since 
the object $\aLam_1$ is a fermion (see Section \ref{tcat}), the category $\Dd_0$ identifies with
the category $\mathcal{C}(\mathfrak{sp}_{2k})_n^{-}$. On the other hand, the free module functor
$X\mapsto X\otimes B$ gives a braided equivalence $\Dd_0\simeq \Dd_B^{dys}$. Thus we get
a chain of equivalences 
$$\mathcal{C}(\mathfrak{sp}_{2n})_k=\Cc \simeq \Dd_B^{dys}\simeq \Dd_0=\mathcal{C}(\mathfrak{sp}_{2k})_n^{-}$$
sending $\alam$ to $\alam^t$. This completes the proof.

\end{proof}

\begin{rem} The proof above gives no information on uniqueness of the structure of braided tensor functor on the functor sending $\alam$ to $\alam^t$.
\end{rem}

\subsection{Second Proof}
In \cite{TW} braided fusion categories with the same fusion rules as $\mathcal{C}(\ssp_{2n},2n+2k+2)$ and $\Cc(\ssp_{2n},2n+2k+1)$ are reconstructed from their Grothendieck (semi-)rings and the eigenvalues of the braiding $c_{X,X}$ on the generating object $X$ analogous to the $2n$-dimensional representation of $\ssp_{2n}$, i.e. with highest weight $(1,0,\ldots,0)$. In particular they prove the following (cf. \cite[Section 9.3]{TW}):
\begin{thm}\label{twtheorem}
Let $\mathcal{C}$ and $\widetilde{\mathcal{C}}$ be braided fusion categories with  Grothendieck rings isomorphic to that of $\Cc(\ssp_{2n},\ell)$. Denote by $X$, $\tilde{X}$ the generating objects of $\Cc,\tilde{\Cc}$ corresponding to $X_{(1)}\in\Cc(\ssp_{2n},\ell)$, and $\be,\tilde{\be}$, $Y,\tilde{Y}$ and $W,\tilde{W}$ the objects corresponding to $X_{(0)},X_{(2)}$ and $X_{(1,1)}$ in $\Cc(\ssp_{2n},\ell)$.  If the eigenvalues of $c_{X,X}$ and $c_{\tilde{X},\tilde{X}}$ coincide on $[\be,Y,W]$ and $[\tilde{\be},\tilde{Y},\tilde{W}]$ then $\Cc$ and $\widetilde{\Cc}$ are equivalent as braided fusion categories.
\end{thm}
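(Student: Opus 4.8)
The plan is to exploit that $X$ (respectively $\tilde X$) tensor-generates the category, so that a braided monoidal equivalence is forced once it is fixed on $X$ together with the morphism spaces between tensor powers of $X$. First I would use the isomorphism of Grothendieck rings to fix a bijection between the simple objects of $\Cc$ and $\widetilde{\Cc}$ matching all fusion coefficients; in particular $\be\leftrightarrow\tilde\be$, $X\leftrightarrow\tilde X$, $Y\leftrightarrow\tilde Y$, $W\leftrightarrow\tilde W$, and the decomposition $X\otimes X\cong\be\oplus Y\oplus W$ is mirrored on the tilde side. Because the fusion rules determine the dimensions of all the spaces $\Hom(X^{\otimes a},X^{\otimes b})$, these dimensions agree for the two categories, and this is what makes the towers of endomorphism algebras abstractly isomorphic.

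Next I would analyze the self-braiding. Since $X\otimes X$ is a sum of the three distinct simples $\be,Y,W$, the endomorphism $c_{X,X}$ is semisimple with exactly three eigenvalues, one on each summand, and hence satisfies a cubic minimal polynomial whose roots are precisely these eigenvalues. Self-duality of $X$ (type $C$) together with the hexagon axioms then forces the operators $c_i:=\mathrm{id}^{\otimes(i-1)}\otimes c_{X,X}\otimes\mathrm{id}^{\otimes(m-i-1)}$ on $X^{\otimes m}$ to satisfy the Birman--Murakami--Wenzl (Kauffman) relations: the cubic eigenvalue relation, the braid relations, and the relation involving the cap--cup idempotent $e_i$ coming from the copy of $\be$ inside $X\otimes X$. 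Thus $\mathrm{End}(X^{\otimes m})$ is a quotient of a specialized BMW algebra, with structure constants determined by the three eigenvalues of $c_{X,X}$ and by the loop value, i.e. the categorical dimension of $X$.

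The key step is then the rigidity statement: the three eigenvalues of $c_{X,X}$ already determine the loop value and hence the full BMW parameter set, so that the specialized BMW algebras attached to $\Cc$ and $\widetilde{\Cc}$ are isomorphic as algebras equipped with the distinguished braid generators and cap--cup idempotents. Given the agreement of Hom-dimensions from the first step, both towers are the same semisimple quotient of the generic BMW tower, with matching central idempotents. I would then build the equivalence concretely: the projections onto the simple summands of $X^{\otimes m}$ are universal BMW (Jones--Wenzl type) idempotents, which are polynomials in the $c_i$ and $e_i$; matching these on the two sides produces compatible decompositions, defines the functor on objects via the bijection of simples and on morphisms via the algebra isomorphism, and the compatibility with $\otimes$ and with the braiding follows because every coherence morphism is a word in the braid generators.

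The hard part will be the rigidity claim that the three braiding eigenvalues suffice to pin down all the BMW parameters and that no residual moduli survive. Concretely one must check that the loop parameter is not an independent invariant but is forced by the eigenvalues through the skein relations (using that the closed loop equals the categorical dimension, itself computable from the braiding data on $X$, $Y$, and $W$), verify semisimplicity of the relevant specialization so that the BMW quotient is exactly the one dictated by the fusion rules, and control the gauge freedom in the monoidal structure isomorphism $J$ so that the resulting functor is genuinely braided rather than merely monoidal. This is precisely the content extracted from the classification in \cite{TW}, which I would invoke for the final identification.
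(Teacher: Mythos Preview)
The paper does not give its own proof of this theorem: it is quoted as a result of Tuba--Wenzl, introduced by ``In particular they prove the following (cf.\ \cite[Section 9.3]{TW})'', and then used as a black box in the second proof of Theorem~\ref{main theorem} and in Theorem~\ref{nonunitary}. So there is nothing in the present paper to compare your argument against.

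That said, your sketch is a faithful outline of the strategy of \cite{TW}: the generating object $X$ is self-dual with $X^{\otimes 2}$ multiplicity-free, the $c_i$ and the rank-one idempotent onto $\be$ satisfy the BMW/Kauffman relations with parameters read off from the three eigenvalues of $c_{X,X}$, and Tuba--Wenzl show these parameters (together with the known fusion ring, which fixes the semisimple quotient) determine the braided tensor structure. The delicate points you flag---that the loop value is forced by the eigenvalue data via the Kauffman skein relation, that the relevant BMW specialization is semisimple with the correct quotient, and that the tensor structure isomorphisms have no residual gauge freedom---are exactly what \cite{TW} establishes, so invoking that reference at the end is appropriate rather than a gap. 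Your write-up is therefore a correct expansion of what the paper merely cites.
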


We remark that if $\ell$ is even we are in the symplectic case, while for $\ell$ odd this is called the ortho-symplectic case.  Notice also that in this subsection we denote the weights by the corresponding Young diagrams, i.e. $\lambda$ etc. rather than as $\widehat{\lambda}$ as is customary in the quantum group approach.

We now provide an alternate proof to Theorem \ref{main theorem} using Theorem \ref{twtheorem}.  

\begin{proof} 
Let $\ell=2k+2n+2$, and denote by $X_\lambda$ the simple objects in $\Cc(\ssp_{2n},\ell)$ and by $\tilde{X}_\mu$ the simple objects in $\Cc(\ssp_{2k},\ell)$, where $\lambda$ is a Young diagram with at most $n$ rows and at most $k$ columns and $\mu$ is a Young diagram with at most $k$ rows and at most $n$ columns.  Notice that $\Cc(\ssp_{2k},\ell)^{-rev}$ and $\Cc(\ssp_{2k},\ell)$ are identical as fusion categories.

We first must verify that $\Cc(\ssp_{2k},\ell)$ has the same fusion rules as $\Cc(\ssp_{2n},\ell)$.  Clearly there is a bijection between these label sets given by transpose.  To see that transpose provides an isomorphism of Grothendieck rings we note by \cite[Proposition 8.6]{TW} that the fusion rules for $\Cc(\ssp_{2n},\ell)$ and $\Cc(\ssp_{2k},\ell)$ are determined by the rule for tensoring with $X:=X_{(1)}$ (resp. $\tilde{X}:=\tilde{X}_{(1)}$.  These are as follows: $X_{(1)}\otimes X_{\lambda}$ is the direct sum of all $X_{\mu}$ whose Young diagram has one box more or one box less than $\hat{\lambda}$ with the same rule for $\tilde{X}$.  Clearly this rule is preserved when taking the transpose. Notice that under this isomorphism the object $X_{(2)}$ is mapped to $\tilde{X}_{(1,1)}$ and $X_{(1,1)}$ is mapped to $\tilde{X}_{(2)}$.

Next we must compare the eigenvalues of the braiding isomorphisms $c_{X,X}$ and $c_{\tilde{X},\tilde{X}}$.

 Observe that $X\otimes X\cong \mathbf{1}\oplus X_{(1,1)}\oplus X_{(2)}$.  The eigenvalues of $c_{X,X}$ on $X_\lambda\subset X^{\otimes 2}$ are computed as in \cite{Resh}: $\pm q^{\frac{1}{2}c_\lambda-c_{(1)}}$ where $c_\lambda:=\langle \lambda+2\rho,\lambda\rangle$ and where we take $+$ if $X_{\lambda}$ appears in $S^2X$ as representations of $U_q\ssp_{2n}$ and $-$ otherwise.  In this case $X_{(2)}\subset S^2 X$ while $\be=X_{(0)},X_{(1,1)}\subset \wedge^2 X$.  It follows that the eigenvalues for $c_{X,X}$ on $[\be,X_{(2)},X_{(1,1)}]$ are $[-q^{-2n-1},q,-q^{-1}]$, whereas for  $c_{\tilde{X},\tilde{X}}$ on $[\be,\tilde{X}_{(2)},\tilde{X}_{(1,1)}]$ are $[-q^{-2k-1},q,-q^{-1}]$.  The effect of transposing diagrams, followed by reversing the braiding and then an overall sign change takes $[-q^{-2k-1},q,-q^{-1}]$ to $[q^{2k+1},q,-q^{-1}]$, and since $q^{2k+1}=-q^{-2n-1}$ we have verified the eigenvalues match.

\end{proof}

In \cite{RowellMathZ} it is shown that the non-unitary ribbon category $\Cc(\ssp_{2n},2n+2k+1)$ has the same fusion rules as $\Cc(\sso_{2k+1},2n+2k+1)$.  We will prove a more precise result, but first we need some additional notation.  One may construct ribbon fusion categories from $U_q \sso_{2k+1}$ and $U_q \ssp_{2n}$ for any choice of $q$ with $q^2$ a primitive $\ell$th root of unity.  Since $\ell=2k+2n+1$ is odd there are, up to Galois conjugation, two choices of this parameter: $e^{\pi i/\ell}$ or $e^{\pi i/\ell}$. For ease of notation we will define $q:=e^{\pi i/\ell}$, and emphasize this parameter dependence by denoting the categories by $\Cc(\ssp_{2n},Q,\ell)$ and $\Cc(\sso_{2k+1},Q,\ell)$, where $Q^2$ is a primitive $\ell$th root of unity.

We first establish the following:
\begin{lem}\label{bclemma} $\Cc(\ssp_{2n},Q,\ell)$ and $\Cc(\sso_{2k+1},Q,\ell)$ are $\Z/2$-graded, with modular trivially graded component, for any choice of $Q$.  In fact we have $\Cc(\ssp_{2n},Q,\ell)\cong\Cc(\ssp_{2n},Q,\ell)_0\boxtimes \Cc(\Z/2,P_1)$ and $\Cc(\sso_{2n},Q,\ell)\cong\Cc(\sso_{2n},Q,\ell)_0\boxtimes \Cc(\Z/2,P_2)$ where $\Cc(\Z/2,P_i)$ are pointed ribbon categories associated with the pre-metric group $(\Z/2,P_i)$.
\end{lem}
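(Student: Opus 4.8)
The plan is to realize each of the two categories as a Deligne product of a modular category with a rank-two pointed ribbon category, using M\"uger's theorem (see \cite{DGNO}) that a non-degenerate fusion subcategory of a braided fusion category splits off as a tensor factor against its M\"uger centralizer. Write $\Cc$ for either $\Cc(\ssp_{2n},Q,\ell)$ or $\Cc(\sso_{2k+1},Q,\ell)$. Since the center of the relevant simply connected group is $\Z/2$ in both type $B$ and type $C$, the category $\Cc$ carries a faithful $\Z/2$-grading $\Cc=\Cc_0\oplus\Cc_1$ coming from the universal grading group $P/Q\cong\Z/2$, an object $X_\lambda$ lying in degree $0$ precisely when its highest weight $\lambda$ lies in the root lattice; here $\Cc_0$ is the adjoint subcategory. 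The plan is then: (i) locate the invertible object generating the grading, (ii) compute its twist to identify the pointed factor, (iii) prove that $\Cc_0$ is non-degenerate, and (iv) split $\Cc$ by M\"uger's theorem.

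First I would single out a distinguished invertible simple object $\delta\in\Cc_1$, namely the central simple current sitting at the corner of the Weyl alcove, whose highest weight $\mu$ generates $P/Q\cong\Z/2$, and record that $\delta^{\otimes 2}\cong\mathbf 1$, so that the full subcategory $\mathcal E:=\langle\delta\rangle$ is pointed of rank two. Its ribbon structure is a quadratic form $P_i$ on $\Z/2$ determined by the single twist $\theta_\delta$, which I would compute by the ribbon-twist formula $\theta_\delta=Q^{\langle\mu,\mu+2\rho\rangle}$ already used in the second proof of Theorem \ref{main theorem}; this both identifies $\mathcal E\cong\Cc(\Z/2,P_i)$ and, carried out separately in each type since $P_1$ and $P_2$ need not agree, tells us whether $\theta_\delta$ is a primitive fourth root of unity (so $\delta$ is non-transparent and $\mathcal E$ is modular) or $\pm 1$ (so $\delta$ is transparent and $\mathcal E$ is symmetric). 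In parallel I would compute the monodromy $c_{\delta,X}c_{X,\delta}$ as the monodromy charge of $X$ and check that it recovers the $\Z/2$-grading, so that $\Cc_0$ is exactly the centralizer $C_\Cc(\delta)=C_\Cc(\mathcal E)$ and $\delta$ has trivial monodromy with all of $\Cc_0$.

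The heart of the argument is step (iii). I would first compute the M\"uger center $Z_2(\Cc)$ directly: a simple object is transparent iff all of its monodromies vanish, which by the explicit $S$-matrix (Kac--Peterson) data for these odd-order quantum group categories forces every transparent object to lie in $\mathcal E$, i.e. $Z_2(\Cc)\subseteq\{\mathbf 1,\delta\}$. Now if $X\in\Cc_0$ is transparent in $\Cc_0$, then since $\Cc_1=\delta\otimes\Cc_0$ and since $\delta$ has trivial monodromy with $\Cc_0$ (the monodromy-charge computation of the previous step), multiplicativity of the double braiding shows $X$ centralizes all of $\Cc$; hence $X\in Z_2(\Cc)\cap\Cc_0\subseteq\mathcal E\cap\Cc_0=\mathbf 1$, so $\Cc_0$ has trivial M\"uger center and is modular. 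M\"uger's theorem then gives a braided equivalence $\Cc\simeq\Cc_0\boxtimes C_\Cc(\Cc_0)$; multiplicativity of $\FP$ together with faithfulness of the grading, $\FP(\Cc)=2\,\FP(\Cc_0)$, forces $\FP(C_\Cc(\Cc_0))=2$, whence $C_\Cc(\Cc_0)$ is pointed of rank two. Since $\delta\in C_\Cc(\Cc_0)$, this complement is exactly $\mathcal E=\Cc(\Z/2,P_i)$, yielding $\Cc\cong\Cc_0\boxtimes\Cc(\Z/2,P_i)$ with $\Cc_0$ modular.

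The main obstacle is the transparency computation opening step (iii): proving that the only transparent simple objects of $\Cc$ are $\mathbf 1$ and $\delta$. This is a genuine computation with the modular/$S$-matrix data of the odd-order categories, not a formal consequence of the grading, and it has to be run in both type $B$ and type $C$. A secondary point of care is that the $\FP$-bookkeeping identifying the rank-two complement should be organized to cover uniformly both the case where $\mathcal E$ is modular (so $\Cc$ itself is modular and $Z_2(\Cc)=\mathbf 1$) and the case where $\mathcal E$ is symmetric (so $Z_2(\Cc)=\mathcal E$), since only $\Cc_0$, and not $\mathcal E$, is asserted to be modular.
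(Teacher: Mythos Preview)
Your overall strategy matches the paper's: locate the nontrivial invertible object, compute its twist and monodromy, deduce that $\Cc_0$ is modular, and then factor $\Cc$ as $\Cc_0\boxtimes\langle\delta\rangle$ via M\"uger's centralizer theorem. The paper carries this out more tersely, computing the monodromy of the invertible with the tensor generator $X_{(1)}$ (respectively $Y_{(1)}$) rather than with all objects, and simply asserting that the invertible generates the M\"uger center; you are actually more explicit than the paper in flagging the $S$-matrix verification of $Z_2(\Cc)\subseteq\langle\delta\rangle$ as a genuine computation.

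There is, however, one concrete inaccuracy in your plan. You write that the monodromy charge of $\delta$ ``recovers the $\Z/2$-grading, so that $\Cc_0$ is exactly the centralizer $C_\Cc(\delta)$.'' In type $C$ at odd $\ell$ this is false: the paper computes $c_{\eta,X_{(1)}}c_{X_{(1)},\eta}=\mathrm{id}$, so $\eta$ is transparent and $C_\Cc(\eta)=\Cc$, not $\Cc_0$. (You seem aware of this possibility in your final paragraph, but the earlier sentence contradicts it.) Fortunately your step (iii) only uses the weaker statement that $\delta$ centralizes $\Cc_0$, which holds in both the transparent and the semion case, so the argument that $Z_2(\Cc_0)\subseteq Z_2(\Cc)\cap\Cc_0\subseteq\langle\delta\rangle\cap\Cc_0=\{\mathbf 1\}$ still goes through. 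Just drop the claim that $\Cc_0=C_\Cc(\delta)$ in general; what you actually need, and what the paper uses, is $C_\Cc(\Cc_0)=\langle\delta\rangle$, which follows from $\delta\in C_\Cc(\Cc_0)$ together with the $\mathrm{FPdim}$ count once $\Cc_0$ is known to be modular.
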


\begin{proof} $\Cc(\ssp_{2n},Q,\ell)$ and $\Cc(\sso_{2k+1},Q,\ell)$ each have one non-trivial invertible object (see \cite{RowellMathZ}), labeled by the weight $(\ell-2n,0,\ldots,0)$ for type $C$ and $\frac{1}{2}(\ell-2k,\ldots,\ell-2k)$ for type $B$.  Let us denote these by $\eta$ for type $C$ and $\gamma$ for type $B$.  

Computing the twists and dimension we find that $\eta$ is a non-unitary fermion ($\dim(\eta)=-1$, $\theta_\eta=1)$ if $Q^\ell=1$ and a non-unitary boson $(\dim(\eta)=-1$, $\theta_\eta=-1)$ if $Q^\ell=-1$.  Moreover, we compute $$c_{\eta,X_{(1)}}c_{X_{(1)},\eta}=\frac{\theta_\eta\theta_{X_{(1)}}}{\theta_{\eta\otimes X_{(1)}}}Id_{X_{(1)}\otimes\eta}=Id_{X_{(1)}\otimes\eta}$$ so that $\eta$ is transparent regardless of the value of $Q^\ell$, and in fact generates the M\"uger center of $\Cc(\ssp_{2n},Q,\ell)$.  In particular the subcategory with simple objects labeled by Young diagrams with an even number of boxes $\Cc(\ssp_{2n},Q,\ell)_0$ does not contain $\eta$ but has centralizer $\langle\eta\rangle$, giving the desired factorization and modularity $\Cc(\ssp_{2n},Q,\ell)_0$.

Similarly we compute that $\gamma$ is a transparent boson ($\dim(\gamma)=\theta_\gamma=\pm 1)$ or fermion ($\dim(\gamma)=-\theta_\gamma=\pm 1$) if $k$ is even or if $k$ is odd
 and $Q^\ell=1$ and a semion ($\dim(\gamma)=\pm 1$ and $\theta_\gamma=\pm i$) otherwise. In either case this shows that the subcategory $\Cc(\sso_{2k+1},Q,\ell)_0$ with simple objects labeled by integer weights forms a modular subcategory, and we have the desired factorization.

\end{proof}

We can now prove the following:
\begin{thm} \label{nonunitary}
Let $\ell=2k+2n+1$ and $q=e^{\pi i/\ell}$. Then there are braid-reversing equivalences between
\begin{enumerate}
\item $\Cc(\ssp_{2n},q,\ell)_0$ and $\Cc(\sso_{2k+1},q^{\frac{\ell+1}{2}},\ell)_0$  and
\item $\Cc(\ssp_{2n},q^2,\ell)_0$ and $\Cc(\sso_{2k+1},q,\ell)_0$.
\end{enumerate}
\end{thm}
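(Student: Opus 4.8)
The plan is to mirror the second proof of Theorem \ref{main theorem}, replacing the symplectic--symplectic fusion correspondence by a symplectic--orthogonal one and invoking Theorem \ref{twtheorem} in the ortho-symplectic regime (odd $\ell$). First I would record, from \cite{RowellMathZ}, that $\Cc(\ssp_{2n},Q,\ell)$ and $\Cc(\sso_{2k+1},Q',\ell)$ both carry the type $C$ fusion rules of $\Cc(\ssp_{2n},\ell)$; the generating objects are the two vector representations $X=X_{(1)}$ (dimension $2n$) and $\tilde X=\tilde X_{(1)}$ (dimension $2k+1$), each with $X\otimes X\cong\be\oplus X_{(2)}\oplus X_{(1,1)}$, and the Grothendieck--ring isomorphism interchanges the roles of $X_{(2)}$ and $X_{(1,1)}$ exactly as transpose does in Theorem \ref{maincc}. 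This identifies the triples $[\be,Y,W]$ and $[\tilde\be,\tilde Y,\tilde W]$ of Theorem \ref{twtheorem} up to that swap, so the task reduces to comparing the eigenvalues of $c_{X,X}$ and $c_{\tilde X,\tilde X}$.

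The computational step evaluates those eigenvalues by the recipe of \cite{Resh} used in the first proof: on the summand $Z_\lambda$ of the tensor square the eigenvalue is $\pm Q^{\frac12 c_\lambda-c_{(1)}}$ with $c_\lambda=\langle\lambda+2\rho,\lambda\rangle$, the sign being $+$ on the symmetric and $-$ on the antisymmetric part, in the normalization where short roots have squared length $2$ in both types. The crucial difference from the unitary case is that the symplectic form is alternating while the orthogonal form is symmetric, so $\be\subset\wedge^2 X$ for type $C$ but $\be\subset S^2\tilde X$ for type $B$; one finds the symplectic triple $[-Q_C^{-2n-1},Q_C,-Q_C^{-1}]$ and the orthogonal triple $[Q_B^{-4k},Q_B^{2},-Q_B^{-2}]$. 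I would then check, for each of the two parameter pairs $(Q_C,Q_B)=(q,q^{(\ell+1)/2})$ and $(q^2,q)$, that after applying the transpose swap and reversing the braiding the triples agree up to an overall sign. The verification collapses to the single relation $q^\ell=-1$ (equivalently $q^{2\ell}=1$): in both cases $Q_B^{2}=-\epsilon\,Q_C$ with $\epsilon=-Q_C^{\,\ell}$, and the exponent conditions on $\be$ reduce to $q^{2\ell}=1$. The overall sign, which is needed precisely for the pair $(q^2,q)$ where $\epsilon=-1$, is the effect of the fermionic twist $\Cc^-$ applied on the symplectic side, whose generator $X$ is odd.

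With eigenvalues matched, Theorem \ref{twtheorem} produces a braided equivalence of the full categories of the shape $\Cc(\ssp_{2n},Q_C,\ell)^{(-)}\simeq\Cc(\sso_{2k+1},Q_B,\ell)^{rev}$, the superscript $(-)$ occurring only for $(Q_C,Q_B)=(q^2,q)$. The last step is to pass to the modular factors: by Lemma \ref{bclemma} each side splits as a modular even part tensored with a rank-two pointed category, and a braided equivalence carries one such factorization to the other. Since every simple object of the even part lies in the trivial component of the $\Z/2$-grading, the twist $\Cc^-$ acts trivially there, so restricting gives the stated braid-reversing equivalences $\Cc(\ssp_{2n},q,\ell)_0\simeq\Cc(\sso_{2k+1},q^{(\ell+1)/2},\ell)_0$ and $\Cc(\ssp_{2n},q^2,\ell)_0\simeq\Cc(\sso_{2k+1},q,\ell)_0$. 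I expect the main obstacle to be the eigenvalue bookkeeping of the middle paragraph: pinning down the type $B$ normalization and Casimir values so that the $S^2$-versus-$\wedge^2$ sign on $\be$ and the discrepancy between $Q_C$ and $Q_B$ reconcile under $q^\ell=-1$, together with confirming that the fusion isomorphism genuinely swaps $X_{(2)}$ and $X_{(1,1)}$.
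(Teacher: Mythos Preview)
There is a genuine gap in your first paragraph: the Grothendieck ring isomorphism from \cite{RowellMathZ} does \emph{not} send the type $C$ vector $X_{(1)}$ to the type $B$ vector $Y_{(1)}$. Indeed no ring isomorphism can do this. The object $X_{(1)}$ lies in the odd component of the $\Z/2$--grading of $\Cc(\ssp_{2n},\ell)$ and therefore tensor--generates the entire fusion ring, whereas $Y_{(1)}$ has integer highest weight, lies in $\Cc(\sso_{2k+1},\ell)_0$, and tensor--generates only that proper subring. The isomorphism $\Phi$ actually sends $Y_{(1)}$ to $X':=X_{(1)}\otimes\eta$, which is even; equivalently, the object in $\Cc(\sso_{2k+1},\ell)$ corresponding to $X_{(1)}$ under $\Phi$ is $Y_{(1)}\otimes\gamma$, not $Y_{(1)}$.

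This error is not merely cosmetic. Your eigenvalue matching in case~(2) would give a braided equivalence $\Cc(\ssp_{2n},q^2,\ell)^{-}\simeq\Cc(\sso_{2k+1},q,\ell)^{rev}$, but for $k$ odd this is impossible: by Lemma~\ref{bclemma} the left side has nontrivial M\"uger center (the transparent $\eta$ survives the $(-)$ twist), while on the right $\gamma$ is a semion and the category is modular. The correct bookkeeping, as in the paper, compares the braiding eigenvalues on $Y_{(1)}$ with those on $X'$; the relation between the eigenvalues on $X'$ and on $X_{(1)}$ is an overall sign governed by whether $\eta$ is a boson or a fermion, and this is what distinguishes the two parameter choices. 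Your $(-)$ twist is playing the role of this $\eta$--sign, but attached to the wrong object. Once you replace $Y_{(1)}$ by $Y_{(1)}\otimes\gamma$ (or, symmetrically, $X_{(1)}$ by $X'$) the argument goes through and lands directly on the even subcategories, with no need for the factorization step at the end.
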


\begin{proof}
We again employ Theorem \ref{twtheorem}.  The Grothendieck rings of $\Cc(\sso_{2k+1},\ell)$ and $\Cc(\ssp_{2n},\ell)$ were already established to be isomorphic in \cite{RowellMathZ}, denote that isomorphism by $\Phi$. To avoid confusion we will denote simple objects in $\Cc(\sso_{2k+1},Q,\ell)$ by $Y_{\lambda}$ and those in $\Cc(\ssp_{2n},P,\ell)$ by $X_{\mu}$. Under the isomorphism $\Phi$ we have that $\Phi(Y_{(1)})=X_{(1)}\otimes X_{\eta}=X_{(\ell-2n-1,0,\ldots,0)}=X^\prime$, and these objects generate the respective modular subcategories $\Cc(\sso_{2k+1},Q,\ell)_0$ and $\Cc(\ssp_{2n},P,\ell)_0$ as in Lemma \ref{bclemma}.  Notice that it is enough to identify the eigenvalues of the braidings on $Y_{(1)}^{\otimes 2}$ and $(X^\prime)^{\otimes 2}$: we may lift this identification to $\Cc(\sso_{2k+1},Q,\ell)$ and $\Cc(\ssp_{2n},P,\ell)$ by tensoring with pointed categories $\Cc(\Z/2,P_i)$ and then apply Theorem \ref{twtheorem}.

The eigenvalues of $c_{Y_{(1)},Y_{(1)}}$ on $[\be,Y_{(2)},Y_{(1,1)}]$ for $\Cc(\sso_{2k+1},Q,\ell)$ are found in \cite[Table 6.1]{LRW}, the are $[Q^{-4k},Q^2,-Q^{-2}]$.  We have $\Phi(Y_{(2)})=X_{(1,1)}$ and vice versa, so transpose followed by braid-reversing gives us the eigenvalues $[Q^{4k},-Q^2,Q^{-2}]$.

For $\Cc(\ssp_{2n},P,\ell)$ the eigenvalues of $c_{X_{(1)},X_{(1)}}$ on $[\be,X_{(2)},X_{(1,1)}]$ are $[-P^{-2n-1},P,-P^{-1}]$ \cite[Table 6.1]{LRW}.  If $P^\ell=-1$ then $\eta$ is a boson so the braiding eigenvalues on $X^\prime$ and $X_{(1)}$ are identical.  Thus if $P=q=e^{\pi i/\ell}$ we must take $Q=q^{\frac{\ell+1}{2}}$ so that $Q^{4k}=q^{2k(\ell+1)}=q^{2k}=-q^{-2n-1}$, and $-Q^2=-(q^{\ell+1})=q$ and $Q^{-2}=-q^{-1}$ so that the eigenvalues match.  If $P^\ell=1$ then $\eta$ is a fermion so that the braiding eigenvalues on $X^\prime$ differ from those on $X_{(1)}$ by an overall sign, giving: $[P^{-2n-1},-P,P^{-1}]$.  Thus if we take $P=q^2=e^{2\pi i/\ell}$ we choose $Q=q$ so that $Q^{4k}=P^{2k}=P^{-2n-1}$, $-Q^2=-P$ and $Q^{-2}=P^{-1}$.  

\end{proof}

We close this section with some remarks on the advantages of this categorical approach.
\begin{rem}
\begin{enumerate}
\item The non-unitary categories $\Cc(\ssp_{2n},2n+2k+1)$ and $\Cc(\sso_{2k+1},2n+2k+1)$ cannot be constructed from affine Lie algebras, to our knowledge.  Twisted affine Lie algebras at fractional levels (see e.g. \cite{Kac}) provide similar combinatorics, but there is no level-preserving fusion product.
\item The results of \cite{TW} provide a description of the categories $\Cc(\ssp_{2n},l)$ via generators and relations. One deduces that the functor between $\Cc(\ssp_{2k},2k+2n+2)$ and $\Cc(\ssp_{2n},2k+2n+2)$ sending $X$ to $\tilde X$ has a \emph{unique} braided tensor structure up to
isomorphism of tensor functors, see \cite{EM}. Similar uniqueness holds for functors from Theorem \ref{nonunitary} provided that the functor sends $X$ to $X^\prime$.
\end{enumerate}
\end{rem}

\section{Appendix: Kac-Peterson formula in the symplectic case} \label{KPproof}
\subsection{}
The main goal of this Section is to give a proof of Proposition \ref{KPcc} based on
\cite[Proposition 1]{KP}. 

Let us recall the setup. Let $\mathfrak{g}$ be a simple finite dimensional Lie algebra and
let $\theta$ be an automorphism of $\mathfrak{g}$ of order 2. Let $\mathfrak{t}=\{ x\in \mathfrak{g}|
\theta(x)=x\}$ be the subspace of $\theta-$invariant vectors and let $\mathfrak{p}=\{ x\in \mathfrak{g}| \theta(x)=-x\}$. The following result is standard:

\begin{lem}[Cartan decomposition]\label{cartandecomp} We have a decomposition $\mathfrak{g} = \mathfrak{t}\oplus\mathfrak{p}$ satisfying the following 
\begin{itemize}
\item[(i)] $\mathfrak{t}$ is a reductive Lie subalgebra of $\mathfrak{g}$ and $[\mathfrak{t}, \mathfrak{p}] \subset \mathfrak{p}$.
\item[(ii)] The restricted Killing form $\kappa|_{\mathfrak{p}\times\mathfrak{p}}$ is a non-degenerate symmetric bilinear form preserved by the action of $\mathfrak{t}$.
\item[(iii)] The action of $\mathfrak{t}$ on $\mathfrak{p}$ gives an embedding $\mathfrak{t}\hookrightarrow\mathfrak{so}(\mathfrak{p}, \kappa|_{\mathfrak{p}\times\mathfrak{p}})\cong \mathfrak{so}_{\dim\mathfrak{p}}$.
\end{itemize}
\end{lem}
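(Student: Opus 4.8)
The plan is to exploit throughout that $\theta$ is a Lie algebra automorphism with $\theta^2=\mathrm{id}$. Since we work over $\C$, $\theta$ is diagonalizable with eigenvalues in $\{\pm 1\}$, so $\mathfrak{g}=\mathfrak{t}\oplus\mathfrak{p}$ is exactly the eigenspace decomposition. Applying $\theta$ to a bracket and using $\theta[x,y]=[\theta x,\theta y]$ immediately yields the three grading relations $[\mathfrak{t},\mathfrak{t}]\subseteq\mathfrak{t}$, $[\mathfrak{t},\mathfrak{p}]\subseteq\mathfrak{p}$, and $[\mathfrak{p},\mathfrak{p}]\subseteq\mathfrak{t}$; the first two are precisely the assertions that $\mathfrak{t}$ is a subalgebra and that $[\mathfrak{t},\mathfrak{p}]\subseteq\mathfrak{p}$.

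For the Killing form I would first record that $\mathrm{ad}(\theta x)=\theta\,\mathrm{ad}(x)\,\theta^{-1}$, whence $\kappa(\theta x,\theta y)=\kappa(x,y)$; applied to $x\in\mathfrak{t}$, $y\in\mathfrak{p}$ this gives $\kappa(x,y)=-\kappa(x,y)$, so $\mathfrak{t}\perp\mathfrak{p}$. Since $\mathfrak{g}$ is simple, Cartan's criterion makes $\kappa$ non-degenerate on $\mathfrak{g}$, and non-degeneracy then descends to each summand of the orthogonal decomposition $\mathfrak{g}=\mathfrak{t}\oplus\mathfrak{p}$; in particular $\kappa|_{\mathfrak{p}\times\mathfrak{p}}$ is non-degenerate and symmetric. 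Its $\mathfrak{t}$-invariance is the infinitesimal form of invariance: ad-invariance of $\kappa$ gives $\kappa([x,y],z)+\kappa(y,[x,z])=0$, and for $x\in\mathfrak{t}$, $y,z\in\mathfrak{p}$ both brackets land in $\mathfrak{p}$ by the grading, proving (ii).

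The same identity proves the embedding in (iii): it says exactly that $\mathrm{ad}(x)|_{\mathfrak{p}}$ is skew-symmetric for $\kappa|_{\mathfrak{p}\times\mathfrak{p}}$, so $x\mapsto \mathrm{ad}(x)|_{\mathfrak{p}}$ carries $\mathfrak{t}$ into $\mathfrak{so}(\mathfrak{p},\kappa|_{\mathfrak{p}\times\mathfrak{p}})$. For injectivity I would show the kernel $\mathfrak{k}=\{x\in\mathfrak{t}:[x,\mathfrak{p}]=0\}$ is zero. The subspace $\mathfrak{p}+[\mathfrak{p},\mathfrak{p}]$ is an ideal of $\mathfrak{g}$ (a short Jacobi computation using the grading), so by simplicity it equals $\mathfrak{g}$ and hence $\mathfrak{t}=[\mathfrak{p},\mathfrak{p}]$; any $x\in\mathfrak{k}$ then commutes with $\mathfrak{p}$ and, via Jacobi, with $[\mathfrak{p},\mathfrak{p}]=\mathfrak{t}$, i.e.\ with all of $\mathfrak{g}$, forcing $x=0$ as $\mathfrak{g}$ has trivial center.

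The remaining, and genuinely harder, point is the reductivity of $\mathfrak{t}$ in (i), which I expect to be the main obstacle. The difficulty is that the restriction $\kappa|_{\mathfrak{t}\times\mathfrak{t}}$, although non-degenerate and invariant, is \emph{not} the Killing form of $\mathfrak{t}$, so Cartan's criterion does not apply directly, and merely possessing a non-degenerate invariant form does not force reductivity. The clean route is to observe that the adjoint action of $\mathfrak{t}$ on $\mathfrak{g}$ is faithful (trivial center of $\mathfrak{g}$) and completely reducible, and then to invoke the standard criterion that a subalgebra of $\mathfrak{gl}(V)$ acting completely reducibly is reductive. Complete reducibility of this action is exactly the classical fact that the fixed-point subalgebra of a finite-order (hence semisimple) automorphism of a semisimple Lie algebra is reductive, which I would cite rather than reprove.
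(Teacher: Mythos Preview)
The paper does not actually prove this lemma: it is introduced with the phrase ``The following result is standard'' and no argument is given. Your proposal therefore cannot be compared against a paper proof; rather, it supplies a verification of a fact the authors take for granted.

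Your argument is essentially correct. The grading relations, the orthogonality $\mathfrak{t}\perp\mathfrak{p}$, non-degeneracy of $\kappa|_{\mathfrak{p}\times\mathfrak{p}}$, $\mathfrak{t}$-invariance, and the injectivity of $\mathfrak{t}\to\mathfrak{so}(\mathfrak{p})$ via the ideal $\mathfrak{p}+[\mathfrak{p},\mathfrak{p}]$ are all handled cleanly. One small point in the injectivity step: you should note that $\mathfrak{p}\neq 0$ (since $\theta$ has order exactly $2$, not $1$) before invoking simplicity to conclude $\mathfrak{p}+[\mathfrak{p},\mathfrak{p}]=\mathfrak{g}$.

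On reductivity of $\mathfrak{t}$: your final paragraph is slightly circular as written, since you propose to deduce reductivity from complete reducibility of the $\mathfrak{t}$-action on $\mathfrak{g}$, and then justify that complete reducibility by appealing to the very reductivity statement you want. What you presumably mean is simply to cite the classical theorem (e.g.\ in Helgason or in Kac's book) that the fixed-point subalgebra of a semisimple automorphism of a semisimple Lie algebra is reductive, full stop. Since the paper itself treats the entire lemma as standard and gives no proof, citing this fact is entirely in keeping with their approach.
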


We will be interested in the following special case.

\begin{eg}\label{tpg} Let $\C^{2n+2k}=\C^{2n}\oplus \C^{2k}$ be a direct sum of two symplectic 
spaces. Let $\theta \in GL(\C^{2n+2k})$ be the linear operator acting by -1 on $\C^{2n}$ and
by 1 on $\C^{2k}$. It is clear that $\theta$ preserves the symplectic form, so it acts by
conjugations on $\mathfrak{g}= \mathfrak{sp}(\C^{2n+2k})=\mathfrak{sp}_{2n+2k}$.

Then $\mathfrak{t}\cong \mathfrak{sp}_{2n}\oplus\mathfrak{sp}_{2k}$ and $\mathfrak{p}$ must have dimension $4nk$ so we get an embedding $\mathfrak{sp}_{2n}\oplus\mathfrak{sp}_{2k}\subset \mathfrak{so}_{4nk}$ from Section \ref{ortho}.
\end{eg}

This example enjoys the following extra property: there exists a Cartan subalgebra $\mathfrak{h}$
of $\mathfrak{g}$ which is contained in $\mathfrak{t}$. We choose and fix such a subalgebra.
Let $\Delta$ and $\Delta_{\mathfrak{t}}$ denote the root systems of $\mathfrak{g}$ and $\mathfrak{t}$ with respect to $\mathfrak{h}$. Let $W$ and $W_{\mathfrak{t}}$ be the corresponding
Weyl groups. We choose a set $\Delta^+$ of positive roots for $\Delta$; then $\Delta^+_{\mathfrak{t}}=\Delta_{\mathfrak{t}}\cap \Delta$ is a set of positive roots for $\Delta_{\mathfrak{t}}$. Let $\rho$ and $\rho_{\mathfrak{t}}$ be the sums of fundamental weights.
Finally let $\hat W, \hat W_{\mathfrak{t}}, \hat \Delta, \hat \Delta_{\mathfrak{t}}$ etc denote the
affine versions of the notions above.

The set
$$W_{\mathfrak{t}}^1=\{w\in W\,|\, \Delta_{\mathfrak{t}}^+\subset w\Delta^+\}=\{w\in W\,|\, w^{-1}\Delta_{\mathfrak{t}}^+\subset \Delta^+\}$$
and its affine counterpart
$$\hat{W}_{\mathfrak{t}}^1=\{w\in \hat{W}\,|\,\hat{\Delta}_{t}^+\subset w\hat{\Delta}^+\}=\{w\in \hat{W}\,|\,w^{-1}\hat{\Delta}_{t}^+\subset \hat{\Delta}^+\}$$
will play a significant role in what follows in view of the following result:

\begin{thm}\label{KP} Let $\mathfrak{g}$, $\mathfrak{t}$ and $\mathfrak{h}$ be as above. 

{\em (i) (Lemma 2.2 of \cite{Part})} Let $S$ be the spinor representation of $\mathfrak{so}_{\dim\mathfrak{p}}$ restricted to $\mathfrak{t}$ and let $L(\mu)$ be the irreducible $\mathfrak{t}-$module of highest weight $\mu$.. Then there is an isomorphism of $\mathfrak{t}-$modules
$$S\cong\bigoplus_{w\in {W}^1} L(w({\rho})-{\rho_{\mathfrak{t}}}).$$

{\em (ii) (Proposition 1 of \cite{KP})}
The affinization of the embedding $\mathfrak{t}\subset \mathfrak{so}_{\dim\mathfrak{p}}$ is a conformal embedding $\widehat{\mathfrak{t}}\subset (\widehat{\mathfrak{so}}_{\dim\mathfrak{p}})_1$. Let $\hat S$ be the spinor representation
of $\widehat{\mathfrak{so}}_{\dim\mathfrak{p}}$ and let $\hat L(\mu)$ be the irreducible $\widehat{\mathfrak{t}}-$module of highest weight $\mu$.
Then there is an isomorphism of $\widehat{\mathfrak{t}}$-modules
$$\hat S\cong\bigoplus_{w\in \hat{W}^1} \hat L(w(\hat{\rho})-\hat{\rho_{\mathfrak{t}}}).$$
\end{thm}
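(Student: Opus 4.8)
The plan is to deduce part (i) from a direct character computation of Kostant type, and then to obtain the affine statement (ii) by repeating the argument with the Weyl character formula replaced by the Weyl--Kac character formula, the finite data $W,\rho$ replaced by their affine counterparts $\hat W,\hat\rho$, and the finiteness of the decomposition guaranteed by the fact that the embedding $\hat{\mathfrak{t}}\subset(\widehat{\mathfrak{so}}_{\dim\mathfrak{p}})_1$ is conformal. Throughout I write $\Delta_{\mathfrak{p}}$ for the set of weights of $\mathfrak{h}$ on $\mathfrak{p}$; by Lemma~\ref{cartandecomp}(ii) these occur in pairs $\pm\alpha$, and choosing $\Delta_{\mathfrak{p}}^+:=\Delta^+\setminus\Delta_{\mathfrak{t}}^+$ splits the spinor module $S=S^+\oplus S^-$ with $\mathrm{ch}(S^+)\pm\mathrm{ch}(S^-)=\prod_{\alpha\in\Delta_{\mathfrak{p}}^+}(e^{\alpha/2}\pm e^{-\alpha/2})$.

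For (i) I would start from the Weyl denominator identity, using $\rho=\rho_{\mathfrak{t}}+\rho_{\mathfrak{p}}$ with $\rho_{\mathfrak{p}}=\tfrac12\sum_{\alpha\in\Delta_{\mathfrak{p}}^+}\alpha$, to write
\[
D_{\mathfrak{t}}\cdot\big(\mathrm{ch}(S^+)-\mathrm{ch}(S^-)\big)=\prod_{\alpha\in\Delta^+}(e^{\alpha/2}-e^{-\alpha/2})=\sum_{v\in W}(-1)^{\ell(v)}e^{v\rho},
\]
where $D_{\mathfrak{t}}=\prod_{\alpha\in\Delta_{\mathfrak{t}}^+}(e^{\alpha/2}-e^{-\alpha/2})$. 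The second ingredient is the factorization $W=W_{\mathfrak{t}}\cdot W^1$ with $\ell(uw)=\ell(u)+\ell(w)$ for $u\in W_{\mathfrak{t}}$, $w\in W^1$; regrouping the Weyl sum over these cosets and applying the Weyl character formula for $\mathfrak{t}$ turns the right-hand side into $\sum_{w\in W^1}(-1)^{\ell(w)}D_{\mathfrak{t}}\,\mathrm{ch}\,L(w\rho-\rho_{\mathfrak{t}})$. Cancelling $D_{\mathfrak{t}}$ gives the signed decomposition $\mathrm{ch}(S^+)-\mathrm{ch}(S^-)=\sum_{w\in W^1}(-1)^{\ell(w)}\mathrm{ch}\,L(w\rho-\rho_{\mathfrak{t}})$. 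To upgrade this to the unsigned statement I would invoke Kostant's observation that for $w\in W^1$ the weights $w\rho-\rho_{\mathfrak{t}}$ are dominant and pairwise distinct, so there is no cancellation and $\mathrm{ch}(S)=\mathrm{ch}(S^+)+\mathrm{ch}(S^-)=\sum_{w\in W^1}\mathrm{ch}\,L(w\rho-\rho_{\mathfrak{t}})$, which is (i) by complete reducibility.

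For (ii) I would run the identical calculation one level up. Since the embedding $\hat{\mathfrak{t}}\subset(\widehat{\mathfrak{so}}_{\dim\mathfrak{p}})_1$ is conformal, the central element acts by matched levels, the branching of the level-one spinor module $\hat S$ into integrable $\hat{\mathfrak{t}}$-modules is finite, and $\hat W^1$ is correspondingly a finite index set. Replacing the Weyl denominator by the Weyl--Kac denominator $\sum_{w\in\hat W}(-1)^{\ell(w)}e^{w\hat\rho}$, using the affine factorization $\hat W=\hat W_{\mathfrak{t}}\cdot\hat W^1$ with additive length, and applying the Weyl--Kac character formula for $\hat{\mathfrak{t}}$, the same denominator cancellation produces the signed identity $\mathrm{ch}(\hat S^+)-\mathrm{ch}(\hat S^-)=\sum_{w\in\hat W^1}(-1)^{\ell(w)}\mathrm{ch}\,\hat L(w\hat\rho-\hat{\rho_{\mathfrak{t}}})$, and the affine analogue of dominance and distinctness of the weights $w\hat\rho-\hat{\rho_{\mathfrak{t}}}$ removes the signs to give (ii).

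I expect the main obstacle to be making the affine step (ii) rigorous rather than the finite step (i). In (i) everything reduces to a finite manipulation of the Weyl denominator once the coset factorization $W=W_{\mathfrak{t}}\cdot W^1$ and the dominance of $w\rho-\rho_{\mathfrak{t}}$ are in hand. In (ii) the characters of $\hat S$ and of the $\hat L(\mu)$ are infinite theta-type series and the Weyl--Kac denominator is an infinite sum, so the cancellation argument must be phrased as an identity of such series (or of their normalized modular forms) with due attention to convergence. Moreover one must establish the conformal property itself --- equivalently, that the branching terminates so that $\hat W^1$ indexes a finite set of genuine integrable dominant affine weights $w\hat\rho-\hat{\rho_{\mathfrak{t}}}$ --- which is the substantive content beyond the formal algebra. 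These are exactly the points settled in \cite{Part,KP}, to which I would defer for the technical core of (ii).
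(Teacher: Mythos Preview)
The paper does not give its own proof of Theorem~\ref{KP}; both parts are quoted with attribution to \cite{Part} and \cite{KP} and used as black boxes, the appendix being devoted instead to deducing the specific symplectic branching (Proposition~\ref{KPcc}) from this theorem. Your sketch via the Weyl (resp.\ Weyl--Kac) denominator identity and the coset factorization $W=W_{\mathfrak{t}}\cdot W_{\mathfrak{t}}^1$ is essentially the standard argument in those references, so there is nothing to compare against in the paper itself.

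One small correction to your narrative for (ii): you invoke conformality of $\hat{\mathfrak{t}}\subset(\widehat{\mathfrak{so}}_{\dim\mathfrak{p}})_1$ to ensure that $\hat W_{\mathfrak{t}}^1$ is finite and the branching terminates, but this reverses the logic. The denominator manipulation goes through formally regardless and produces the decomposition of $\hat S$ as a (possibly infinite) sum over $\hat W_{\mathfrak{t}}^1$; conformality is then \emph{equivalent} to finiteness of the branching, and in \cite{KP} it is established by an independent argument (a central charge / Sugawara computation, or equivalently checking that the dual Coxeter numbers match). So the identity in (ii) does not require conformality as an input, and the first sentence of (ii) is a separate assertion. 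This is a minor reorganisation, not a gap in your proposal.
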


\begin{rem} 
(i) The levels of affine Lie algebras appearing in Theorem \ref{KP} (ii) can be computed as follows:
Let $\mathfrak{t}_1$ be a direct summand of the Lie algebra $\mathfrak{t}$. Then the level
of $\widehat{\mathfrak{t}}_1$ is the ratio of the Killing form of $\mathfrak{g}$ restricted to
$\mathfrak{t}_1$ and the Killing form of $\mathfrak{t}_1$. In particular in the setup of Example
\ref{tpg} we get the conformal embedding \eqref{tg}.

(ii) Note that the dimension of space $\mathfrak{p}$ is even (this is twice the cardinality of
$\Delta^+ \setminus \Delta^+_{\mathfrak{t}}$). Thus both $S$ and $\hat S$ are the sums of two half-spinor modules. In particular in the setup of Example \ref{tpg} we have $\hat S=\aLam_+\oplus \aLam_-$.
\end{rem}
\subsection{The symplectic case}
We will make Theorem \ref{KP} explicit in the setup of Example \ref{tpg}.
Let $$\{e_1-e_2,e_2-e_3,\dots,e_{n+k-1}-e_{n+k},2e_{n+k}\}$$ be the simple roots of $\mathfrak{sp}_{2n+2k}$. Then the simple roots of $\mathfrak{sp}_{2n}$ are
$$\{e_1-e_2,e_2-e_3,\dots,e_{n-1}-e_n,2e_n\}$$and the simple roots of $\mathfrak{sp}_{2k}$ are
$$\{e_{n+1}-e_{n+2},e_{n+2}-e_{n+3},\dots,e_{n+k-1}-e_{n+k},2e_{n+k}\}.$$

 Let $\Delta_{\mathrm{long}}^+=\{ 2e_1, \ldots , 2e_{n+k}\}$ be the set of long positive roots of $\mathfrak{sp}_{2n+2k}$. 
 Since the long positive roots of $\mathfrak{sp}_{2n}$ are $2e_{1},\dots , 2e_{n}$ and of $\mathfrak{sp}_{2k}$ are $2e_{n+1},\dots,2e_{n+k}$, we have $\Delta_{\mathrm{long}}^+\subset \Delta^+_{\mathfrak{t}}$.

\begin{prop}\label{affine}  In the setup of Example \ref{tpg}, $\hat{W}_{\mathfrak{t}}^1=W_{\mathfrak{t}}^1$. 
\end{prop}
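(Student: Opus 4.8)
The plan is to establish the two inclusions separately; only $\hat{W}_{\mathfrak{t}}^1\subseteq W_{\mathfrak{t}}^1$ requires work. The inclusion $W_{\mathfrak{t}}^1\subseteq \hat{W}_{\mathfrak{t}}^1$ is immediate once one unwinds the definitions: for $w\in W\subset\hat{W}$ and a positive affine root $\alpha+m\delta\in\hat{\Delta}_{\mathfrak{t}}^+$ with $\alpha\in\Delta_{\mathfrak{t}}$ and $m\ge 0$, one has $w^{-1}(\alpha+m\delta)=w^{-1}\alpha+m\delta$, which lies in $\hat{\Delta}^+$ automatically when $m>0$ and, when $m=0$, lies in $\Delta^+$ precisely by the definition of $W_{\mathfrak{t}}^1$; imaginary roots are fixed by $w$.

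For the reverse inclusion I would invoke the decomposition $\hat{W}=W\rtimes T$ from Section \ref{affroots} and write an arbitrary $w\in\hat{W}_{\mathfrak{t}}^1$ as $w=t_\beta u$ with $u\in W$ and $t_\beta$ the translation by an element $\beta$ of the coroot lattice, which for type $C_{n+k}$ is the integral lattice $\bigoplus_i\Z e_i$. Recalling that $t_\beta$ fixes $\delta$ and acts on a finite root $\alpha$ by $t_\beta(\alpha)=\alpha-(\alpha,\beta)\delta$, I would compute, for $\alpha\in\Delta_{\mathfrak{t}}$ and $m\in\Z$,
$$w^{-1}(\alpha+m\delta)=u^{-1}\alpha+\bigl(m+(\alpha,\beta)\bigr)\delta.$$
By the description of $\hat{\Delta}^+$ in Section \ref{affroots}, membership of this root in $\hat{\Delta}^+$ forces $m+(\alpha,\beta)\ge 0$.

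The crux is to feed the level-one long roots into this inequality. Since every long root $2e_i$ of $\mathfrak{sp}_{2n+2k}$ lies in $\Delta_{\mathfrak{t}}$---this is exactly the containment $\Delta_{\mathrm{long}}^+\subseteq\Delta_{\mathfrak{t}}^+$ recorded above, together with the symmetry of the root system---both $2e_i+\delta$ and $-2e_i+\delta$ belong to $\hat{\Delta}_{\mathfrak{t}}^+$. Taking $m=1$ and $\alpha=\pm 2e_i$ yields $1\pm 2\beta_i\ge 0$, hence $|\beta_i|\le\tfrac12$; as $\beta_i\in\Z$ this gives $\beta_i=0$ for every $i$, so $\beta=0$ and $w=u\in W$. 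Restricting the defining condition of $\hat{W}_{\mathfrak{t}}^1$ to the level-zero roots then reads $w^{-1}\Delta_{\mathfrak{t}}^+\subseteq\Delta^+$, i.e. $w\in W_{\mathfrak{t}}^1$, which finishes the argument. I expect the only genuine obstacle to be the structural observation that makes everything work, namely that \emph{all} long roots of $\mathfrak{g}$ survive in $\mathfrak{t}$, so that the roots $\pm 2e_i+\delta$ are available for every coordinate $i$; the remaining bookkeeping---that the translation lattice is integral, so that the bound $|\beta_i|\le\tfrac12$ collapses to $\beta_i=0$---is routine.
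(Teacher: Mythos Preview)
Your argument is correct and follows essentially the same route as the paper's proof: decompose an element of $\hat W$ into its finite Weyl part and its translation part, then test against the level-one long roots $\pm 2e_i+\delta$ (all of which lie in $\hat\Delta_{\mathfrak t}^+$ precisely because $\Delta_{\mathrm{long}}\subset\Delta_{\mathfrak t}$) to force every coordinate of the translation to vanish. The only cosmetic differences are that the paper writes the decomposition as $w'=wt$ with the translation on the right and therefore feeds in the pre-twisted roots $w(\delta\pm 2e_i)$ so that $w^{-1}$ cancels, whereas you put the translation on the left and apply $w^{-1}$ directly to $\pm 2e_i+\delta$; and you spell out the easy inclusion $W_{\mathfrak t}^1\subset\hat W_{\mathfrak t}^1$, which the paper leaves implicit.
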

\begin{proof} Let $w'\in\hat{W}_{\mathfrak{t}}^1$. Recall $\hat{W}=W\rtimes T$ (\cite[Proposition 6.5]{Kac}) where
$T$ is the group of translations. So $w'=wt$ for some $w\in W$ and $t^{-1}=t_{m_1e_1+\dots +m_{n+k}e_{n+k}}\in T$ for $m_i\in\Z$. Let $\delta\in \hat{\Delta}_{t}^+$ be the indivisible imaginary root. Consider

$$\begin{aligned}t^{-1}w^{-1}(w(\delta-2e_i))&=t^{-1}(\delta)-t^{-1}(2e_i)\\
                        &=\delta-2e_i+2m_i\delta.\end{aligned}$$
                       
Since $w(\delta-2e_i)=\delta-w(2e_i)\in\Delta^+_{\mathfrak{t}}$ and $w'\in\hat{W}_{\mathfrak{t}}^1$, it follows $m_i\geq0$, see Section \ref{affroots}.

Similarly $w(\delta+2e_i)\in\Delta^+_{\mathfrak{t}}$ and $(w')^{-1}w(\delta+2e_i)=\delta+2e_i-2m_i\delta$. Thus $w'\in\hat{W}_{\mathfrak{t}}^1$ implies $m_i\leq0$. Hence $m_i=0$.
\end{proof}

\begin{rem} In view of Theorem \ref{KP} the Proposition \ref{affine} says that the branching rules
for the finite dimensional and affine cases are ``the same'', cf Remark \ref{ccfinite} (ii).
\end{rem}

Let us describe $W_{\mathfrak{t}}^1$. Recall that the group $W$ is the group of signed permutations; let $S_{n+k}\subset W$ be the subgroup of permutations without signs.
Recall that $\Delta_{\mathrm{long}}^+\subset \Delta^+_{\mathfrak{t}}$. Thus we have
$$W^1=\{w\in W \,|\, w^{-1}\Delta_{\mathrm{long}}^+=\Delta_{\mathrm{long}}^+, w^{-1}(e_i-e_{i+1}) \in \Delta^+, \text{for all }i\neq n\}.$$
Observe that the the first condition implies that $w^{-1}\in S_{n+k}$. Thus we have

\begin{lem}\label{permute} The set $W_{\mathfrak{t}}^1$ is contained in $S_{n+k}\subset W$. 
A permutation $s\in S_{n+k}$ is in $W_{\mathfrak{t}}^1$ if and only if
$$s^{-1}(1)<s^{-1}(2)<\dots <s^{-1}(n), s^{-1}(n+1)<s^{-1}(n+2)<\dots<s^{-1}(n+k).\;\; \square$$
\end{lem}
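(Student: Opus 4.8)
The plan is to unwind the definitions of $W_{\mathfrak{t}}^1$ in the explicit root-system coordinates set up just above, and read off the combinatorial description directly. First I would establish the containment $W_{\mathfrak{t}}^1\subset S_{n+k}$. The set is defined as $\{w\in W\,|\, w^{-1}\Delta_{\mathfrak{t}}^+\subset\Delta^+\}$, and the displayed reformulation $W^1=\{w\in W\,|\, w^{-1}\Delta_{\mathrm{long}}^+=\Delta_{\mathrm{long}}^+,\ w^{-1}(e_i-e_{i+1})\in\Delta^+\ \text{for all }i\neq n\}$ already isolates the two pieces of $\Delta_{\mathfrak{t}}^+$: the long roots $2e_j$ (all of which lie in $\Delta_{\mathfrak{t}}^+$) and the short simple roots of $\mathfrak{t}$, namely $e_i-e_{i+1}$ for $i\neq n$. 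The key observation is that $W$ acts as signed permutations, so $w^{-1}(2e_j)=\pm 2e_{\sigma(j)}$ for the underlying permutation $\sigma$; requiring each of these to remain a \emph{positive} long root (equivalently $w^{-1}\Delta_{\mathrm{long}}^+=\Delta_{\mathrm{long}}^+$ as a set of positive roots) forces every sign to be $+$, i.e. $w^{-1}\in S_{n+k}$. This is exactly the remark ``the first condition implies that $w^{-1}\in S_{n+k}$'' preceding the lemma, and I would spell out that a sign change on coordinate $j$ sends $2e_j$ to $-2e_j\notin\Delta^+$.

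With $w\in W_{\mathfrak{t}}^1$ now known to be an (unsigned) permutation, write $s=w$ and note $s^{-1}\in S_{n+k}$ as well. The remaining conditions are that $s^{-1}(e_i-e_{i+1})=e_{s^{-1}(i)}-e_{s^{-1}(i+1)}\in\Delta^+$ for all $i\neq n$. In the type $C$ (indeed type $A$ inside it) positive root convention, $e_a-e_b\in\Delta^+$ precisely when $a<b$. Hence the condition $i\neq n$ in the two blocks $\{1,\dots,n\}$ and $\{n+1,\dots,n+k\}$ translates into $s^{-1}(i)<s^{-1}(i+1)$ for $i\in\{1,\dots,n-1\}\cup\{n+1,\dots,n+k-1\}$, which is exactly the pair of chains of inequalities $s^{-1}(1)<\dots<s^{-1}(n)$ and $s^{-1}(n+1)<\dots<s^{-1}(n+k)$ in the statement. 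I would present this as an ``if and only if'': the forward direction reads off positivity from $s^{-1}$ being increasing on each block, and the converse checks that these inequalities guarantee $s^{-1}\Delta_{\mathfrak{t}}^+\subset\Delta^+$ on all of $\Delta_{\mathfrak{t}}^+$, not just the simple roots, using that $\Delta_{\mathfrak{t}}^+$ is generated (as positive roots) by the simple roots of $\mathfrak{t}$ together with the long roots already handled.

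The only genuine subtlety, and the step I expect to require the most care, is verifying that controlling $w^{-1}$ on the \emph{simple} roots of $\mathfrak{t}$ (plus the long roots) suffices to control it on \emph{all} of $\Delta_{\mathfrak{t}}^+$; a priori $w^{-1}\Delta_{\mathfrak{t}}^+\subset\Delta^+$ is a condition on every positive root of $\mathfrak{t}$. Here I would invoke the standard fact that for $s\in S_{n+k}$ increasing on each of the two blocks, every positive root of $\mathfrak{t}$ of the form $e_a-e_b$ (with $a<b$ in the same block) or $e_a+e_b,\,2e_a$ (long roots) maps under $s^{-1}$ to a root of the form $e_{s^{-1}(a)}\pm e_{s^{-1}(b)}$ or $2e_{s^{-1}(a)}$ that is again positive: for the short roots $e_a-e_b$ within a block this is immediate from monotonicity, and the long roots $e_a+e_b$, $2e_a$ always map to positive roots because $s^{-1}$ carries no signs. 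This reduction is what makes the two chains of inequalities both necessary and sufficient, and I would state it cleanly rather than enumerate every root type.
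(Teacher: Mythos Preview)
Your proposal is correct and follows essentially the same approach as the paper: the paper's proof consists only of the displayed reformulation of $W_{\mathfrak{t}}^1$ preceding the lemma together with the one-line observation that the long-root condition forces $w^{-1}\in S_{n+k}$, after which the lemma is marked $\square$. You have simply spelled out in detail the steps the paper leaves implicit, including the verification that controlling $w^{-1}$ on the simple roots of $\mathfrak{t}$ (once $w^{-1}$ is known to be unsigned) suffices for all of $\Delta_{\mathfrak{t}}^+$.
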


Now if we consider $n+k$ dots on a straight line and paint the dots numbered $s^{-1}(1), s^{-1}(2), \dots ,s^{-1}(n)$ in black and the dots numbered $s^{-1}(n+1), s^{-1}(n+2), \dots ,s^{-1}(n+k)$ in white we get precisely ``black and white dots diagram'' as in Example \ref{bwdots}.
Conversely from such a diagram we get a unique permutation as in Lemma \ref{permute}.
Thus we constructed a bijection between $W_{\mathfrak{t}}^1$ and the set $C_{n,k}=I_{n,k}$ from
Section \ref{combinatorics}. 

\begin{eg} The permutation $s^{-1}$ corresponding to the diagram
$$\bullet\bullet\circ\bullet\bullet\bullet\bullet\circ\circ\circ\bullet\circ \circ$$
from example \ref{bwdots} is 
$$\left( \begin{array}{ccccccccccccc}1&2&3&4&5&6&7&8&9&10&11&12&13\\
1&2&4&5&6&7&11&3&8&9&10&12&13\end{array}\right) .$$
\end{eg}

It remains to compute the weights $w({\rho})-{\rho_{\mathfrak{t}}}$ for $w\in W_{\mathfrak{t}}^1$
(we can restrict ourselves to the finite case in view of Proposition \ref{affine}). We have
$$\rho =(n+k)e_1+(n+k-1)e_2+\ldots +2e_{n+k-1}+e_{n+k}=\sum_{i=1}^{n+k}(n+k+1-i)e_i,$$
and
$$\rho_{\mathfrak{t}}=ne_1+(n-1)e_2+\ldots +e_n+ke_{n+1}+(k-1)e_{n+2}\ldots +e_{n+k}.$$
For a permutation $s\in S_{n+k}$ we have
$$s\rho =\sum_{i=1}^{n+k}(n+k+1-i)e_{s(i)}=\sum_{i=1}^{n+k}(n+k+1-s^{-1}(i))e_i.$$

Thus we have
$$s\rho -\rho_{\mathfrak{t}}=\sum_{i=1}^{n}(k+i-s^{-1}(i))e_i+\sum_{i=1}^{k}(n+i-s^{-1}(i))e_{n+i}.$$
Here the first summand represents the weight of $\mathfrak{sp}_{2n}\subset \mathfrak{sp}_{2n}\oplus\mathfrak{sp}_{2k}=\mathfrak{t}$ and the second summand represents the weight of $\mathfrak{sp}_{2k}\subset \mathfrak{sp}_{2n}\oplus\mathfrak{sp}_{2k}=\mathfrak{t}$. It is clear
that the second weight is computed similarly to the first one with replacement of the sequence
$s^{-1}(1)<s^{-1}(2)<\dots <s^{-1}(n)$ by the sequence $s^{-1}(n+1)<s^{-1}(n+2)<\dots<s^{-1}(n+k)$ or, equivalently, by replacing all the black dots by the white ones and vice versa. This is precisely the description of the bijection $\lambda \mapsto \lambda^c$ in the language of diagrams. 
Thus Proposition \ref{KPcc} is proved.

 \end{document}